      \theoremstyle{plain}
      \newtheorem{theorem}{Theorem}[section]
      \newtheorem{lemma}[theorem]{Lemma}
      \newtheorem{corollary}[theorem]{Corollary}
      \newtheorem{proposition}[theorem]{Proposition}
      \theoremstyle{definition}
      \newtheorem{definition}[theorem]{Definition}
      \theoremstyle{remark}
      \newtheorem{remark}[theorem]{Remark}
      \def\@setcopyright{}
      \def\serieslogo@{}
\begin{document}

%



   \author{Yijing Wu}
   \address{Department of Mathematics, the University of Texas at Austin}
   \email{yijingwu@math.utexas.edu}





   

   \title[]{fractional analogue of k-Hessian operators}


   \begin{abstract}
     Applying ideas of fractional analogue of Monge-Amp\'{e}re operator in \cite{C1} by L. Caffarelli and F. Charro, we consider an analogue of fractional k-Hessian operators expressed as concave envelopes of fractional linear operators, and reproduce the same regularity results when $k=2$. 

Under the set up of global solutions prescribing data at infinity and global barriers, the key estimate is to prove that fractional 2-Hessian operator is strictly elliptic. Then we can apply nonlocal Evans-Krylov theorem  \cite{EC1}\cite{EC2}  to prove such solutions are classical.
        \end{abstract}





   \date{\today}


   \maketitle



   \section{Introduction}
 
Monge-Amp\'{e}re operator is a special case of $k$-Hessian operators, which are defined by
\[
f_k(D^2u)(x)=(\sum_{1\leq i_1<i_2<...<i_k \leq n}{\lambda_{i_1}\lambda_{i_2}...\lambda_{i_k}})^{1/k},
\]
for $k$ integer and $1\leq k \leq n$. Here $\lambda_1, \lambda_2, ...,\lambda_n$ are eigenvalues of the matrix $D^2u(x)$, and $f_k$ is concave and elliptic \cite{KH1} \cite{KH2} of $\lambda$ when 
\[
\lambda=(\lambda_1, \lambda_2, ...,\lambda_n)\in \overline{ \Gamma_k}.
\]
$\Gamma_k$ is an open symmetric convex cone defined by 
\[
\Gamma_k=\{\lambda \in \mathbb{R}^n, \sigma_l(\lambda)>0,l=1,2,...,k\}.
\]
Here 
\[
\sigma_k(\lambda)=\sum_{1\leq i_1<i_2<...<i_k \leq n}{\lambda_{i_1}\lambda_{i_2}...\lambda_{i_k}}
\]
is the k-th elementary symmetric polynomial. And when $k=n$, $\Gamma_n$ is the positive cone
\[
\Gamma_n=\{\lambda \in \mathbb{R}^n, \lambda_i>0, i=1,2,...,n\}.
\]
One main ingredient of the paper \cite{C1} is the following:\\
The Monge-Amp\'{e}re equation is a concave fully nonlinear equation. If $u$ is a convex solution solving
\[
(\det{D^2u})^{1/n}(x)=g(x),
\]
then the equation is equivalent to
\[
\inf_{M\in \mathcal{M}}{L_Mu}(x)=g(x),
\]
where $L_M$ is a linear operator  defined by
\[
L_Mu(x)=trace(MD^2u(x))=\Delta(u\circ \sqrt{M})(x),
\]
and the set $\mathcal{M}$ consists of all positive symmetric matrices with determinant $n^{-n}$, independent of $x$. Moreover, the infimum is realized when $M$ is a constant multiple of the matrix of cofactor of $D^2u(x)$. \\
Then we define the fractional analogue of Monge-Amp\'{e}re equation as
\[
F_s[u](x)=\inf_{M\in \mathcal{M}}\{-C_{n,s}^{-1}(-\Delta)^s(u\circ \sqrt{M})(x)\}.
\]
Under this setting, regularity results for fractional Monge-Amp\'{e}re equation are discussed in \cite{C1}.

Therefore, it is natural to consider k-Hessian operators as concave envelopes of linear operators. We give the following definition:
\begin{definition}\label{def_fk}
As an analogue of definition of the Monge-Amp\'{e}re operator, we define
\begin{equation*}
\begin{split}
f_k(D^2u(x))&=\inf_{M\in \mathcal{M}_k}\{trace(MD^2u(x))\}\\
&=\inf_{M\in \mathcal{M}_k}\{\Delta(u\circ \sqrt{M})(x))\}\\
&=\inf_{M\in \mathcal{M}_k}\{\Delta(u(\sqrt{M}x))\}.
\end{split}
\end{equation*}
\end{definition}   

Details and explanations of the set $\mathcal{M}_k$ will be further discussed in section 2. Then we are able to give a similar definition for fractional analogues of k-Hessian operators:
\begin{definition}\label{def1}
Define fractional k-Hessian operators as
\begin{equation*}
\begin{split}
F_{k,s}[u](x)&=\inf_{M \in \mathcal{M}_k}{\{-C_{n,s}^{-1}(-\Delta)^{s}(u\circ \sqrt{M})(x)\}}\\
&=\inf_{M \in \mathcal{M}_k}{\{P.V.\int_{\mathbb{R}^n}{\frac{u(\sqrt{M}x+y)-u(\sqrt{M}x))}{|\sqrt{M}^{-1}y|^{n+2s}}\det{\sqrt{M}^{-1}}dy}\}}\\
&=\inf_{M \in \mathcal{M}_k}{\{\frac{1}{2}\int_{\mathbb{R}^n}{\frac{\delta(u,\sqrt{M}x,y)}{|\sqrt{M}^{-1}y|^{n+2s}}\det{\sqrt{M}^{-1}}dy}\}} ,
\end{split}
\end{equation*}
where 
\[
\delta(u,x,y)=u(x+y)-2u(x)+u(x-y).
\]
\end{definition}

The main idea of this article is to reproduce the regularity results of fractional Monge-Amp\'{e}re equation in \cite{C1} to fractional k-Hessian equations. 

In this article, our main purpose is to follow the ideas and set up of the paper \cite{C1}, and to prove:

(a) On each $n-1$ dimensional space, the fractional Laplacian is bounded from above and strictly positive. (Proposition \ref{prop1})

(b) When $k=2$, the operators that are close to the infimum remain strictly elliptic. (Theorem \ref{main theorem})

Here we define the strictly elliptic operator:
\begin{definition}\label{def2}
For $\epsilon_0>0$, we define a non-degenerate and strictly elliptic operator
\begin{equation*}
\begin{aligned}
F_{k,s}^{\epsilon_0}[u](x)&=\inf_{M \in \mathcal{M}_k}{\{P.V.\int_{\mathbb{R}^n}{\frac{u(\sqrt{M}x+y)-u(\sqrt{M}x))}{|\sqrt{M}^{-1}y|^{n+2s}}\det{\sqrt{M}^{-1}}dy}, \lambda_{min}(M)\geq \epsilon_0 \}}\\
&=\inf_{M \in \mathcal{M}_k}{\{\frac{1}{2}\int_{\mathbb{R}^n}{\frac{\delta(u,\sqrt{M}x,y)}{|\sqrt{M}^{-1}y|^{n+2s}}\det{\sqrt{M}^{-1}}dy},\lambda_{min}(M)\geq \epsilon_0 \}}\label{def2}.
\end{aligned}
\end{equation*}
\end{definition}

The main theorem of this article is:
\begin{theorem}\label{main theorem}
Consider $1/2<s<1$, and assume $u$ is Lipschitz continuous and semiconcave with constants $L$ and $SC$ respectively. And
\begin{equation}\label{eq1}
(1-s)F_{2,s}[u](x)\geq \eta_0
\end{equation}
for any $x \in \Omega$, in the viscosity sense for some constant $\eta_0 >0$.
Then
\begin{equation}\label{eq2}
F_{2,s}[u](x)=F_{2,s}^{\epsilon_0}[u](x)
\end{equation}
for any $x \in \Omega$ in the classical sense, with 
\[
\epsilon_0=\epsilon_0(\eta_0,n,s,L,SC)>0
\]
given by \eqref{theta}.
\end{theorem}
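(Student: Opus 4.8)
The plan is to show that matrices $M$ which come close to realizing the infimum in $F_{2,s}[u](x)$ cannot have a vanishing smallest eigenvalue, and to quantify this with an explicit $\epsilon_0$. The heuristic is that if $\lambda_{\min}(M)$ is small, then $\sqrt{M}$ severely compresses one direction, so that the nonlocal operator $-C_{n,s}^{-1}(-\Delta)^s(u\circ\sqrt{M})$ essentially measures the second difference of $u$ along a thin tube and is dominated by the behavior of $u$ on an $(n-1)$-dimensional slice. By Proposition \ref{prop1}, the fractional Laplacian restricted to each such $(n-1)$-dimensional subspace is strictly positive and bounded above; combined with the constraint $M\in\mathcal{M}_2$ (which fixes $\sigma_2$ of the eigenvalues, hence forces the other eigenvalues to blow up as $\lambda_{\min}\to 0$), one should be able to bound the value $-C_{n,s}^{-1}(-\Delta)^s(u\circ\sqrt{M})(x)$ from above by a quantity that tends to $0$ (or at least falls below $\eta_0/(1-s)$) as $\lambda_{\min}(M)\to 0$. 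Choosing $\epsilon_0=\epsilon_0(\eta_0,n,s,L,SC)$ so that this upper bound is strictly less than $\eta_0/(1-s)$ whenever $\lambda_{\min}(M)<\epsilon_0$ then shows such $M$ are irrelevant to the infimum, giving \eqref{eq2}.

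I would carry this out in the following steps. \textbf{Step 1.} Fix $x\in\Omega$ and $M\in\mathcal{M}_2$ and diagonalize $M=O^{T}\mathrm{diag}(\mu_1,\dots,\mu_n)O$; after the orthogonal change of variables the operator becomes $\frac12\int \delta(v,\sqrt{M}x,y)\,\mu_1^{-1/2}\cdots\mu_n^{-1/2}|\,\mathrm{diag}(\mu_i^{-1/2})y|^{-n-2s}\,dy$ with $v=u\circ O^T$, which still has constants $L$ and $SC$. \textbf{Step 2.} Use the semiconcavity bound $\delta(v,z,y)\le SC|y|^2$ together with the Lipschitz bound $\delta(v,z,y)\le 2L|y|$ to split the integral into a region near the origin (where the $|y|^2$ bound controls the $|y|^{-n-2s}$ singularity since $s<1$) and a far region (where the $|y|$ bound and the anisotropic weight give decay since $2s>1$). \textbf{Step 3.} Track the dependence on $\mu_{\min}=\lambda_{\min}(M)$: rescaling $y$ by $\sqrt{M}$ concentrates the weight on the subspace spanned by the large-eigenvalue directions, and the factor $\det\sqrt{M}^{-1}=(\mu_1\cdots\mu_n)^{-1/2}$ combined with the constraint $\sigma_2(\mu)=$ const forces at least one $\mu_i$ to be of order $\mu_{\min}^{-1}$, from which one extracts a bound of the form $-C_{n,s}^{-1}(-\Delta)^s v(\sqrt{M}x)\le C(n,s)\,(L^2+SC)\,\omega(\mu_{\min})$ for some modulus $\omega$ with $\omega(0^+)=0$. \textbf{Step 4.} Since $(1-s)F_{2,s}[u](x)\ge\eta_0$ in the viscosity sense and $u$ is semiconcave (hence the operator is classically defined, cf. the role of semiconcavity in \cite{C1}), deduce that only matrices with $(1-s)\,C(n,s)(L^2+SC)\,\omega(\mu_{\min})\ge\eta_0$ can approach the infimum; solving for $\mu_{\min}$ defines $\epsilon_0$ via \eqref{theta}. \textbf{Step 5.} Conclude that the infimum over $\mathcal{M}_2$ equals the infimum over $\{M\in\mathcal{M}_2:\lambda_{\min}(M)\ge\epsilon_0\}$, which is \eqref{eq2}; the passage from the viscosity to the classical statement follows because semiconcavity makes $F_{2,s}[u]$ pointwise well-defined and the constrained infimum over a compact set of matrices is attained.

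The main obstacle I anticipate is Step 3: making the dependence on $\mu_{\min}$ genuinely quantitative. Naively bounding $\delta$ in the integral loses the anisotropy, so one must exploit the precise shape of the anisotropic kernel $\mu_1^{-1/2}\cdots\mu_n^{-1/2}|\,\mathrm{diag}(\mu_i^{-1/2})y|^{-n-2s}$ — writing $y=\sqrt{M}\,w$ turns it into the standard kernel $|w|^{-n-2s}$ acting on $v(\sqrt{M}(x+w))-2v(\sqrt{M}x)+v(\sqrt{M}(x-w))$, and now the smallness is hidden in the fact that $\sqrt{M}$ collapses the $\mu_{\min}$-direction, so that along that direction $v$ is being sampled only within a window of width $\sqrt{\mu_{\min}}$. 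Controlling the resulting expression requires combining semiconcavity in the collapsed direction (giving the decisive smallness) with the $(n-1)$-dimensional estimate of Proposition \ref{prop1} in the complementary directions, and checking that the constant that degenerates does so only through $\mu_{\min}$ and not through the (possibly large) other eigenvalues — this is exactly where the cone constraint defining $\mathcal{M}_2$, i.e. the fixed value of $\sigma_2$, must be used to bound the large eigenvalues in terms of $\mu_{\min}^{-1}$. A secondary but routine difficulty is keeping the viscosity-versus-classical bookkeeping clean, which I expect to handle by the same semiconcavity argument used for the Monge–Ampère case in \cite{C1}.
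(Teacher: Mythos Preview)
Your plan has the sign of the key estimate backwards, and this is fatal. You aim in Step~3 to prove an upper bound $-C_{n,s}^{-1}(-\Delta)^s(u\circ\sqrt{M})(x)\le C\,\omega(\lambda_{\min}(M))$ with $\omega(0^+)=0$, and then in Step~4 to combine this with the hypothesis $F_{2,s}[u](x)\ge\eta_0/(1-s)$ to rule out small $\lambda_{\min}$. But $F_{2,s}[u](x)$ is an \emph{infimum}: every admissible $M$ gives a value at least $F_{2,s}[u](x)$, so your upper bound together with the hypothesis would force $\eta_0/(1-s)\le C\,\omega(\lambda_{\min}(M))$ for \emph{every} $M\in\mathcal{M}_2$. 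Since $\mathcal{M}_2$ contains matrices with arbitrarily small $\lambda_{\min}$, this is a contradiction, so the upper bound you are aiming for cannot hold. The phenomenon is exactly the opposite: for degenerate $M$ the value of the operator is \emph{large}, in fact $\ge C_4\mu_0\,\epsilon^{-s}/(1-s)\to\infty$ as $\epsilon=\lambda_{\min}(M)\to 0$. Such matrices are then excluded by comparing with the \emph{upper} bound on $F_{2,s}[u](x)$ obtained by testing with the explicit $M_0=\sqrt{(n-1)/(2n)}\,I\in\mathcal{M}_2$.

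The correct mechanism runs as follows. Write the integral over $\mathbb{R}^n$ as an average over $(n-1)$-dimensional subspaces orthogonal to $\tilde e(\theta)$, $\theta\in(-\pi/2,\pi/2]$, and split into $I_1$ (angles $|\theta|\le\theta_0$ with $\theta_0\sim\lambda_{n-1}/\lambda_n$) and $I_2$ (the rest). The lower bound in Proposition~\ref{prop1} is what drives the argument: on each slice the $(n-1)$-dimensional fractional Laplacian is at least $\mu_0/(1-s)>0$, and the constraint $M\in\mathcal{M}_2$ forces not only one large eigenvalue $\sim 1/(4\epsilon)$ but also that $\lambda_1^{-(n+2s)}-\lambda_{n-1}^{-(n+2s)}$ is small relative to $\lambda_1^{-(n+2s)}$; this rigidity of the remaining $n-1$ eigenvalues is exactly what lets you convert the slice positivity into $I_1\ge C_4\mu_0\,\epsilon^{-s}/(1-s)$. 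The term $I_2$ is handled separately by showing it is nonnegative (via a contradiction argument that again uses the family structure of $\mathcal{M}_2$). Your use of semiconcavity and Lipschitz bounds is not wasted---they enter in proving Proposition~\ref{prop1} itself (bounding an auxiliary term $J_1$) and in the upper bound $\mu_1$---but they cannot by themselves produce the smallness you were hoping for, since they only control $\delta(u,x,y)$ from above.
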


\begin{remark}
For simplicity, we shall assume that $0\in \Omega$ and then prove \eqref{eq2} for $x=0$. Note for the sequel that since u is semiconcave, Lemma 2.2 in paper \cite{C1} implies that $F_{2,s}(x)$ is defined in the classical sense for all $x\in \Omega$ and \eqref{eq1} holds pointwise. And this theorem states that the infimum in the definition of $F_{2,s}[u]$ cannot be realized by matrices that are too degenerate, which proves that the fractional analogue of 2-Hessian operators are locally uniformly elliptic.

\end{remark}
\begin{remark}
We can check in the proofs that $\epsilon_0$ is given by \eqref{theta}, that
\[
\epsilon_0=\sqrt{\frac{n}{n-1}}C_4^{1/s}(\frac{\mu_0}{\mu_1})^{\frac{1}{s}},
\]
with $C_4=C_4(n,s,L,SC,\eta_0)$ given by \eqref{C4}, $\mu_0$ given by \eqref{mu0} and $\mu_1$ given by \eqref{mu1}. And this shows that Theorem \ref{main theorem} is stable as $s \to 1$, that the constant $\epsilon_0$ will not goes to $0$ as $s \to 1$. 

\end{remark}

Under a framework of global solutions prescribing data at infinity and global barriers, which are set up to avoid complexity of dealing with issues from the boundary data for non-local equations, the following theories for fractional Monge-Amp\'{e}re equations also work for fractional k-Hessian equations: 

(c) Existence of solutions. (Theorem \ref{existence})

(d) Semiconcavity and Lipschitz continuity of solutions. (Theorem \ref{semiconcavity})

(e) The non-local fully nonlinear theory developed in \cite{EC1} \cite{EC2} applies, in particular the nonlocal Evans-Krylov theorem. 

\begin{theorem}\label{existence}
There exists a unique solution of 
$$ \left\{
\begin{aligned}
&F_{k,s}[u](x)=u(x)-\phi(x) \ \  \text{in}\ \  \mathbb{R}^n\\
&(u-\phi)(x)\to 0 \ \  \text{as}\ \  |x|\to \infty.
\end{aligned}
\right.
$$
\end{theorem}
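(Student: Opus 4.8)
The plan is to prove Theorem \ref{existence} by Perron's method, in direct analogy with the corresponding existence result for the fractional Monge--Amp\'{e}re equation in \cite{C1}. Write $\mathcal{G}[u](x):=F_{k,s}[u](x)-\bigl(u(x)-\phi(x)\bigr)$ and $L_M v:=-C_{n,s}^{-1}(-\Delta)^s(v\circ\sqrt{M})$, so that $F_{k,s}[u]=\inf_{M\in\mathcal{M}_k}L_M u$ and $\mathcal{G}$ is an infimum of the degenerate elliptic, translation-structured nonlocal operators $v\mapsto L_M v-v+\phi$, each of which fits the framework of \cite{EC1},\cite{EC2}. Two structural facts drive the argument: the zeroth order term $-u$ makes $\mathcal{G}$ strictly monotone in $u$, which yields comparison and hence uniqueness; and Proposition \ref{prop1} guarantees that on the admissible class of Lipschitz, semiconcave functions the contribution of the (possibly very degenerate) matrices $M\in\mathcal{M}_k$ is finite and controlled, so that $\mathcal{G}$ is genuinely well defined and the nonlocal maximum-principle machinery applies to the whole family $\{L_M:M\in\mathcal{M}_k\}$. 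This last point is the only place where replacing $\mathcal{M}$ by $\mathcal{M}_k$ has to be checked; everything else follows the template of \cite{C1}.

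\emph{Comparison and uniqueness.} First I would show that if $\underline{u}$ is a bounded viscosity subsolution and $\overline{u}$ a bounded viscosity supersolution of $\mathcal{G}=0$ with $\limsup_{|x|\to\infty}(\underline{u}-\overline{u})\le0$, then $\underline{u}\le\overline{u}$ on $\mathbb{R}^n$. If $\theta:=\sup_{\mathbb{R}^n}(\underline{u}-\overline{u})>0$, the behaviour at infinity forces $\theta$ to be (nearly) attained at a finite point; after sup-/inf-convolution regularization and doubling of variables one tests the inequalities $\mathcal{G}[\underline{u}]\ge0\ge\mathcal{G}[\overline{u}]$ against a common test function. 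The nonlocal terms are estimated exactly as in the comparison principle of \cite{EC1}, whereas the zeroth order contribution at the doubling point is $-(\underline{u}-\overline{u})\le-\theta<0$, a contradiction. Applying this in both directions to two solutions with $\,\cdot-\phi\to0$ gives uniqueness.

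\emph{Barriers.} Next I would produce an ordered pair of global barriers: a supersolution $\overline{u}$ and a subsolution $\underline{u}$ with $\underline{u}\le\overline{u}$ and $\overline{u}-\phi\to0^{+}$, $\underline{u}-\phi\to0^{-}$ as $|x|\to\infty$. The natural attempt is $\overline{u}=\phi+w$, $\underline{u}=\phi-w$ with $w>0$ a fixed smooth radial profile tending to $0$ at infinity but decaying slowly enough that $(-\Delta)^s(w\circ\sqrt{M})$ stays comparable to $w$; using the linearity $L_M(\phi+w)=L_M\phi+L_Mw$, the uniform-in-$M$ bound on $\sup_{M\in\mathcal{M}_k}|L_Mw|$ furnished by Proposition \ref{prop1}, and taking the height of $w$ large compared with $\sup_{\mathbb{R}^n}|F_{k,s}[\phi]|$, one checks $\mathcal{G}[\overline{u}]\le0\le\mathcal{G}[\underline{u}]$. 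This step is where the standing assumptions on $\phi$ (boundedness, and the regularity and decay built into the notion of ``data at infinity'') enter.

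\emph{Perron.} Finally, set $u(x):=\sup\{v(x):v\text{ a viscosity subsolution of }\mathcal{G}=0,\ \underline{u}\le v\le\overline{u}\}$. Then $\underline{u}\le u\le\overline{u}$ gives $u-\phi\to0$; the upper semicontinuous envelope $u^{*}$ is again a subsolution by stability of nonlocal viscosity subsolutions under suprema (\cite{EC1}); and the lower semicontinuous envelope $u_{*}$ is a supersolution by the usual bump argument --- were it to fail at a point, a small localized perturbation could be added to $u$ keeping it a subsolution below $\overline{u}$ but strictly above $u$ nearby, contradicting maximality, the perturbation being admissible because $F_{k,s}$ is continuous on smooth test functions and the tail contribution is controlled by Proposition \ref{prop1}. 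Comparison then gives $u^{*}\le u_{*}$, so $u^{*}=u_{*}=u$ is continuous and solves the problem with the prescribed behaviour at infinity. The step I expect to be the real obstacle is verifying that the degenerate-elliptic infimum over $\mathcal{M}_k$ still obeys the comparison and bump lemmas of \cite{EC1},\cite{EC2} --- these are not automatic for arbitrary degenerate families --- together with the uniform-in-$M$ barrier estimate; in both, it is Proposition \ref{prop1} that makes $\mathcal{M}_k$ an admissible family, playing here the role that $\mathcal{M}$ played in \cite{C1}.
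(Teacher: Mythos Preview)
Your overall strategy---comparison driven by the zeroth-order term, global barriers, then Perron---is precisely the route the paper takes: it does not give an independent argument but simply invokes Sections 4--6 of \cite{C1} verbatim, observing that the only change from the Monge--Amp\`ere case is the replacement of $\mathcal{M}$ by $\mathcal{M}_k$.

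Where your proposal goes wrong is the repeated appeal to Proposition~\ref{prop1} as the ingredient that ``makes $\mathcal{M}_k$ an admissible family.'' This is misplaced on three counts. First, Proposition~\ref{prop1} is established only for $k=2$, whereas Theorem~\ref{existence} is stated for all $k$. Second, its hypotheses include the pointwise lower bound $(1-s)F_{2,s}[u]\ge\eta_0$, i.e., it already assumes an object satisfying the equation; in the paper's logical order it is a tool for the \emph{subsequent} ellipticity analysis (existence $\Rightarrow$ Lipschitz/semiconcavity $\Rightarrow$ Proposition~\ref{prop1} $\Rightarrow$ Theorem~\ref{main theorem}), not an input to existence. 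Third, its conclusion is a two-sided bound on the $(n-1)$-dimensional fractional Laplacian of restrictions of $u$ to hyperplanes; it says nothing about a uniform-in-$M$ bound on $L_M w$ for a fixed barrier profile $w$, nor about tail control in a bump lemma.

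What actually carries the existence proof over from \cite{C1} is far more elementary and is contained in Proposition~\ref{kHessian}: every $M\in\mathcal{M}_k$ is positive symmetric, so each $L_M$ is a (possibly degenerate) elliptic linear nonlocal operator, and $\mathcal{M}_k$ contains a scalar multiple of the identity (take $B=I\in\Gamma_k$, giving $M_0=Df_k(I)$). The supersolution barrier only needs an upper bound on $F_{k,s}[\overline u]=\inf_M L_M\overline u$, which one gets by evaluating at the single matrix $M_0$; the subsolution barrier and the bump construction only need $L_M v\ge 0$ whenever $\delta(v,x,y)\ge 0$, which holds for every positive $M$ with no uniformity in $M$ required. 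These are exactly the structural properties of $\mathcal{M}$ used in \cite{C1}, and they transfer to $\mathcal{M}_k$ without invoking Proposition~\ref{prop1}.
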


\begin{theorem}\label{semiconcavity}
Assume $\phi$ is semiconcave and Lipschitz continuous, and let v be the solution of 
$$ \left\{
\begin{aligned}
&F_{k,s}[v](x)=v(x)-\phi(x) \ \  \text{in}\ \  \mathbb{R}^n\\
&(v-\phi)(x)\to 0 \ \  \text{as}\ \  |x|\to \infty.
\end{aligned}
\right.
$$
Then, v is Lipschitz continuous and semiconcave with the same constants as $\phi$.
\end{theorem}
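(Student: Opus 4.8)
The plan is to transcribe the comparison argument used for the fractional Monge-Amp\'{e}re equation in \cite{C1}, which exploits only three structural features of $F_{k,s}$: it is translation invariant, it is concave in $u$ (being an infimum of linear operators), and it is unaffected by adding constants. Together with the comparison principle behind the uniqueness in Theorem~\ref{existence} --- namely, that if $w_1$ is a viscosity subsolution and $w_2$ a viscosity supersolution of $F_{k,s}[w]=w-\phi$ in $\mathbb{R}^n$ with $\limsup_{|x|\to\infty}(w_1-w_2)(x)\le 0$, then $w_1\le w_2$ everywhere --- these are all that is needed. Write $L$ and $SC$ for the Lipschitz and semiconcavity constants of $\phi$.

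\textbf{Lipschitz continuity.} Fix $h\in\mathbb{R}^n$ and take as competitor $w=v(\cdot+h)-L|h|$. Since $F_{k,s}$ is translation invariant and unaffected by added constants, $F_{k,s}[w](x)=F_{k,s}[v](x+h)=v(x+h)-\phi(x+h)=w(x)+L|h|-\phi(x+h)$, which is $\ge w(x)-\phi(x)$ because $\phi(x+h)\le\phi(x)+L|h|$; hence $w$ is a viscosity subsolution of $F_{k,s}[w]=w-\phi$. Moreover $(w-v)(x)=\big((v-\phi)(x+h)-(v-\phi)(x)\big)+\big(\phi(x+h)-\phi(x)-L|h|\big)$, whose first bracket tends to $0$ and whose second is $\le 0$, so $\limsup_{|x|\to\infty}(w-v)\le 0$. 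The comparison principle then gives $w\le v$, i.e.\ $v(x+h)-v(x)\le L|h|$ for all $x$; exchanging $h$ and $-h$ gives the reverse bound, so $v$ is Lipschitz with constant $L$.

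\textbf{Semiconcavity.} Fix $h$ and take $w=\tfrac12\big(v(\cdot+h)+v(\cdot-h)\big)-\tfrac{SC}{2}|h|^2$. Since $F_{k,s}$ is concave in $u$ and unaffected by added constants, $F_{k,s}[w](x)\ge\tfrac12\big(F_{k,s}[v](x+h)+F_{k,s}[v](x-h)\big)=\tfrac12\big((v-\phi)(x+h)+(v-\phi)(x-h)\big)=w(x)+\tfrac{SC}{2}|h|^2-\tfrac12\big(\phi(x+h)+\phi(x-h)\big)$, using translation invariance and the equation for $v$. By semiconcavity of $\phi$, $\phi(x+h)+\phi(x-h)\le 2\phi(x)+SC|h|^2$, so the last expression is $\ge w(x)-\phi(x)$ and $w$ is a viscosity subsolution of $F_{k,s}[w]=w-\phi$. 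As before $\limsup_{|x|\to\infty}(w-v)\le 0$, now using $(v-\phi)(\cdot\pm h)\to 0$ and $\tfrac12\big(\phi(x+h)+\phi(x-h)\big)-\phi(x)\le\tfrac{SC}{2}|h|^2$. Comparison yields $w\le v$, i.e.\ $v(x+h)+v(x-h)-2v(x)\le SC|h|^2$ for all $x,h$, which is semiconcavity with constant $SC$.

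\textbf{The main obstacle.} The difficulties are analytic rather than algebraic. First, one must justify that $v(\cdot+h)$ and the half-sum $\tfrac12\big(v(\cdot+h)+v(\cdot-h)\big)$ are again viscosity sub-/supersolutions of the (translation-invariant, concave) operator $F_{k,s}$: this is the usual stability of nonlocal viscosity solutions, but it requires care with the tails of the integrals and with commuting the viscosity test with the infimum over $\mathcal{M}_k$. More substantial is the comparison principle itself on $\mathbb{R}^n$ with data prescribed only at infinity, in the exact form invoked above --- the statement underlying Theorem~\ref{existence}. Following \cite{C1}, one would let the $\limsup$-at-infinity condition take the place of a boundary comparison by perturbing the subsolution with the global barriers of the framework, which are strictly positive near infinity, and then sending the perturbation to zero.
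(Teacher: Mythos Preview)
Your proposal is correct and follows the same approach the paper intends: the paper itself does not give an independent proof of Theorem~\ref{semiconcavity} but simply observes (Remark~1.9) that ``we can apply the exact same proofs of existence and $C^{1,1}$ regularity in the fractional Monge-Amp\`ere case, which are carefully explained in section 4, 5 and 6 in \cite{C1}''. What you have written is precisely that transcription---the translation/averaging comparison argument from \cite{C1} using only translation invariance, concavity of $F_{k,s}$ as an infimum of linear operators, invariance under addition of constants, and the comparison principle underlying Theorem~\ref{existence}---so you have in fact supplied the details the paper leaves to the reference.
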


\begin{remark}
The difference between fractional Monge-Amp\'{e}re operators and k-Hessian operators is the set of matrices $M$ among which we take infimum of fractional linear operators. In Monge-Amp\'{e}re, we consider the infimum among all positive symmetric matrices with determinant $n^{-n}$, and in k-Hessian, we consider the infimum among all positive symmetric matrices in the set $\mathcal{M}_k$ (which will be discussed in Section 2, Proposition \ref{kHessian}). Hence, we can apply the exact same proofs of existence and $C^{1,1}$ regularity in the fractional Monge-Amp\'{e}re case, which are carefully explained in section 4,5 and 6 in \cite{C1}, to prove Theorem \ref{existence} and Theorem \ref{semiconcavity} for our fractional k-Hessian equations. \\
\end{remark}

Thus by what we have proved in (b), that such operators are strictly elliptic, and $C^{1,1}$ estimates in (d), we can apply nonlocal Evans-Krylov theorem \cite{EC1} \cite{EC2}  to prove solutions of fractional 2-Hessian equations are $C^{2s+\alpha}$, and further classical, under the framework of global solutions prescribing data at infinity and global barriers.\\

\begin{remark}
The proof for strictly ellipticity of the operator is required to improve the $C^{1,1}$ regularity to $C^{2s+\alpha}$ regularity. Therefore, we only care about the case $1/2<s<1$ in Theorem \ref{main theorem}, or there is no improvement in the regularity. We also care what would happen as $s \to 1$, and in the Remark 1.6, we can see that Theorem \ref{main theorem} is stable as $s \to 1$.

\end{remark}


   \section{Notations and Preliminaries}
   \label{prelim}
In this section, we will first state some notations. And then we will discuss one important representation of Monge-Amp\'{e}re operator(Proposition \ref{MongeAmpere}). Next we will derive a similar representation for k-Hessian operator(Proposition \ref{kHessian}), show how we construct the set $\mathcal{M}_k$ in Definition \ref{def1}, and give the definition of fractional k-Hessian operator. 

Given a function $u$, we shall denote the second-order increment of $u$ at $x$ in the direction of $y$ as
\[
\delta(u, x, y) = u(x+y)+u(x-y)-2u(x),
\]
and fractional laplacian is defined as
\begin{equation*}
\begin{aligned}
-(-\Delta)^su(x)&=C_{n,s}P.V.\int_{\mathbb{R}^n}{\frac{u(y)-u(x)}{|x-y|^{n+2s}}dy}\\
& =\frac{C_{n,s}}{2}\int_{\mathbb{R}^n}{\frac{u(x+y)+u(x-y)-2u(x)}{|x-y|^{n+2s}}dy}.
\end{aligned}
\end{equation*}
And the constant $C_{n,s}$ is a normalization constant.\\

For square matrices, $A>0$ means positive definite and $A\geq 0$ positive semidefinite. We denote $\lambda_i(A)$ the eigenvalues of A, in particular $\lambda_{min}(A)$ and $\lambda_{max}(A)$  are the smallest and largest eigenvalues, respectively.

We shall denote the $n$th-dimensional ball of radius $r$ and center $x$ by $B^n_r(x)=\{y\in \mathbb{R}^n, |y-x|<r\}$, and the corresponding $(n-1)$-dimensional sphere by $\partial B^n_r(x)=\{y\in \mathbb{R}^n, |y-x|=r\}$. $\mathcal{H}^{n}$ stands for the n-dimensional Haussdorff measure.\\

Let $A \subset \mathbb{R}^n$ be an open set. We say that a function $u: A \to \mathbb{R}$ is semi-concave if it is continuous in $A$ and there exists a constant $SC\geq 0$ such that $\delta(u, x, y)\leq SC|y|^2$ for all $x,y \in \mathbb{R}^n$ such that the segment $[x-y, x+y] \subset A$. And the constant $SC$ is called a semi-concavity constant for u in A.
Alternatively, a function $u$ is semi-concave in $A$ with constant $SC$ if $u(x)-\frac{SC}{2}|x|^2$ is concave in $A$. Geometrically, this means that the graph of $u$ can be touched from above at every point by a
paraboloid of the type $a+<b,x>+\frac{SC}{2}|x|^2$.\\

We denote the constant $C_n^k=\frac{n!}{k!(n-k)!}$ for $n,k \in \mathbb{N}$ and $n \geq k$.\\

We can write Monge-Amp\'{e}re operator as a concave envelope of linear operators, that
\begin{proposition}
\label{MongeAmpere}
If $u$ is convex, then the Monge-Amp\'{e}re operator $f(D^2u)=(\det{D^2u})^{1/n}$ can be expressed as 
\[
f(D^2u)=(\det{D^2u})^{1/n}=\inf_{M\in \mathcal{M}}{L_Mu},
\]
where $\mathcal{M}$ is the set of all positive symmetric matrices with determinant $n^{-n}$, and the linear operator $L_Mu$ is defined by
\[
L_Mu=trace(MD^2u)=\Delta(u\circ \sqrt{M}).
\]
\end{proposition}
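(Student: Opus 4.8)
The plan is to strip the statement to a fact about a single fixed matrix. Fix $x$ and set $A=D^2u(x)$; since $u$ is convex, $A$ is symmetric and positive semidefinite. The identities $\operatorname{trace}(MD^2u)=\Delta(u\circ\sqrt{M})$ are a direct chain-rule computation, so the content of the proposition is the equality
\[
(\det A)^{1/n}=\inf_{M\in\mathcal{M}}\operatorname{trace}(MA),
\]
which I would prove by establishing the two inequalities separately.

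For the inequality $\operatorname{trace}(MA)\ge(\det A)^{1/n}$ valid for every $M\in\mathcal{M}$ — the substantive direction — I would pass to the matrix $\sqrt{M}A\sqrt{M}$, which is symmetric positive semidefinite and similar to $MA$, hence shares its (nonnegative, real) eigenvalues $\mu_1,\dots,\mu_n$, its trace, and its determinant. The arithmetic--geometric mean inequality then gives
\[
\operatorname{trace}(MA)=\sum_i\mu_i\ \ge\ n\Big(\prod_i\mu_i\Big)^{1/n}=n\big(\det M\det A\big)^{1/n}=n\big(n^{-n}\det A\big)^{1/n}=(\det A)^{1/n},
\]
using $\det M=n^{-n}$; taking the infimum over $\mathcal{M}$ yields ``$\ge$''.

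For the reverse inequality I would exhibit minimizers or near-minimizers, distinguishing two cases. If $A$ is invertible, equality in AM--GM forces all $\mu_i$ equal, which points to $M$ proportional to $A^{-1}$: the choice $M=\tfrac{(\det A)^{1/n}}{n}A^{-1}$ has $\det M=n^{-n}$ and $MA=\tfrac{(\det A)^{1/n}}{n}I$, so $\operatorname{trace}(MA)=(\det A)^{1/n}$ exactly; since $A^{-1}=(\det A)^{-1}\operatorname{cof}(A)$ for symmetric $A$, this $M$ is a constant multiple of $\operatorname{cof}(D^2u(x))$, recording the remark made in the introduction. If $A$ is singular, then $(\det A)^{1/n}=0$ while $\operatorname{trace}(MA)\ge0$ for all $M\in\mathcal{M}$, so I only need the infimum to be $0$: writing $A=Q\operatorname{diag}(\lambda_1,\dots,\lambda_{n-1},0)Q^{T}$ and taking $M_\varepsilon=Q\operatorname{diag}(n^{-1}\varepsilon,\dots,n^{-1}\varepsilon,n^{-1}\varepsilon^{1-n})Q^{T}\in\mathcal{M}$ gives $\operatorname{trace}(M_\varepsilon A)=n^{-1}\varepsilon\sum_{i<n}\lambda_i\to0$ as $\varepsilon\to0^{+}$.

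There is no serious obstacle here — this is classical linear algebra, with AM--GM doing all the work — but I would be careful about one bookkeeping point: the infimum is attained (a genuine minimum) precisely when $D^2u(x)$ is nondegenerate, while for a convex but not strictly convex $u$ it is only approached, so the identity must be read with ``$\inf$'' and not ``$\min$''. I would also note, without it affecting the algebra above, that for a general convex function the Hessian should be understood in the almost-everywhere (Alexandrov) sense.
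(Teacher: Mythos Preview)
Your argument is correct and is the classical, elementary route: reduce to a fixed symmetric $A\ge 0$, use AM--GM on the eigenvalues of $\sqrt{M}A\sqrt{M}$ for the lower bound, and exhibit an explicit (near-)minimizer for the upper bound. The paper proceeds differently: it writes the concave, $1$-homogeneous function $f(A)=(\det A)^{1/n}$ as the infimum of its tangent planes, $f(A)=\inf_{B>0}\{Df(B)A\}$ (using $Df(B)B=f(B)$), then computes the gradient $Df(B)=\tfrac{1}{n}(\det B)^{1/n-1}\operatorname{cof}(B)$ and checks that this matrix always has determinant $n^{-n}$, so the tangent slopes range exactly over $\mathcal{M}$. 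Your approach is shorter and also cleanly handles the degenerate case $\det A=0$ via an explicit approximating family $M_\varepsilon$, which the paper glosses over by assuming $A>0$. The paper's approach, on the other hand, is the one that generalizes: the same tangent-plane computation with $f$ replaced by $f_k=\sigma_k^{1/k}$ is precisely how $\mathcal{M}_k$ is constructed in the next proposition, whereas AM--GM has no obvious analogue there.
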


\begin{proof}[Proof of Proposition \ref{MongeAmpere}]
Let $A=D^2u(x)$ which is positive, and we consider Monge-Amp\'{e}re operator $f(A)=(\det{A})^{1/n}$ as a concave envelope of linear operators, that
\[
f(A)=\inf_{B \in \Gamma_n}{\{Df(B)(A-B)+f(B)\}},
\]
and $Df(B)$ is a linear operator mapping $\mathbb{R}^{n \times n}$ to $\mathbb{R}$, that
\[
Df(B)A=\lim_{\epsilon \to 0}{\frac{f(B+\epsilon A)-f(B)}{\epsilon}}.
\]
Since $f$ is homogeneous of degree 1, that for any $t>0$,
\[
f(tB)=tf(B),
\]
and we can prove
\[
Df(B)B=\lim_{\epsilon\to0}{\frac{f(B+\epsilon B)-f(B)}{\epsilon}}=f(B).
\]
Letting $E_{ij} \in \mathbb{R}^{n \times n}$ be the matrix with the $i,j$th entry being 1 and all other entries being 0, we can calculate
\[
Df(B)E_{ij}=\frac{1}{n}(\det{B})^{\frac{1}{n}-1}b_{ij}^*,
\]
where $b_{ij}^*$ is the $i,j$th entry of the cofactor matrix of B. Thus, by linearity,
\[
Df(B)A=Df(B)(a_{ij}E_{ij})=a_{ij}(\frac{1}{n}(\det{B})^{\frac{1}{n}-1}b_{ij}^*)=trace(AM^T),
\]
where
\[
M=M(B)=Df(B)=\frac{1}{n}(\det{B})^{\frac{1}{n}-1}b_{ij}^*.
\]
And by the property of cofactor matrix $B^*$ that $B^{-1}=(\det{B})^{-1}B^*$, we know
\[
\det{M}=n^{-n}.
\]
Therefore, by the bijection between matrices and cofactor matrices, without loss of generality, we can conclude that
\[
(\det{D^2u})^{1/n}=\inf_{M\in \mathcal{M}}{L_Mu}=\inf_{M\in \mathcal{M}}{trace(MD^2u)},
\]
where $\mathcal{M}$ is the set of all positive symmetric matrices with determinant $n^{-n}$. \\
\end{proof}

Monge-Amp\'{e}re operator is the n-Hessian operator. Thus, we try to find a similar way of representing the concave k-Hessian operator.\\

\begin{proposition}
\label{kHessian}
If $D^2u \in \Gamma_k$, then the k-Hessian operator 
\[
f_k(D^2u)=(\sum_{1\leq i_1<i_2<...<i_k \leq n}{\lambda_{i_1}\lambda_{i_2}...\lambda_{i_k}})^{1/k}
\]
is a concave envelope of linear operators, that
\[
f_k(D^2u)=\inf_{M\in \mathcal{M}_k}\{trace(MD^2u)\}\\.
\]
And a matrix $M \in \mathcal{M}_k$ if there exists a matrix $B \in \Gamma_k$, such that the $i,j$th entry of the matrix $M$ satisfies the following conditions:
\begin{equation}\label{Mii}
M_{ii}=\frac{1}{kf_k(B)^{k-1}} \sum_{\mbox{\tiny$\begin{array}{c}
i\leq j_1<j_2<...<j_{k-1}\leq n,\\
j_1,...,j_{k-1} \neq i \end{array}$}}\det{B_{(j_1,...,j_{k-1})}},
\end{equation}
where $B_{(j_1,...,j_{k-1})}$ denotes the submatrix of $B$ formed by choosing the $j_1,j_2,...,j_{k-1}$th rows and columns.\\
When $k \geq 3$,
\begin{equation}\label{Mij3}
M_{ij}=-\frac{1}{kf_k(B)^{k-1}} \sum_{\mbox{\tiny$\begin{array}{c}
i\leq j_1<j_2<...<j_{k-2}\leq n,\\
j_1,...,j_{k-2} \neq i,j \end{array}$}}\det{B_{(j,j_1,...,j_{k-2})(i,j_1,...,j_{k-2})}},
\end{equation}
where $B_{(j,j_1,...,j_{k-2})(i,j_1,...,j_{k-2})}$ denotes the submatrix of $B$ formed by choosing the $j,j_1,j_2,...,j_{k-2}$th rows and  $i,j_1,j_2,...,j_{k-2}$th columns.\\
And when $k=2$,
\begin{equation}\label{Mij2}
M_{ij}=-\frac{1}{2f_2(B)}b_{ji},
\end{equation}
where $b_{ji}$ denotes the $j,i$th entry of matrix $B$. \\
Moreover, for each $M \in \mathcal{M}_k$, $M$ is a positive symmetric matrix. \\
\end{proposition}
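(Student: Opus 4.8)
The plan is to follow the argument for Proposition \ref{MongeAmpere} almost verbatim, with $\det^{1/n}$ replaced by $\sigma_k^{1/k}$ and the positive cone replaced by the G\aa rding cone $\Gamma_k$. Recall from \cite{KH1}, \cite{KH2} that on $\Gamma_k$ the operator $f_k=\sigma_k^{1/k}$ is concave and positively homogeneous of degree one. Since $f_k$ is concave on the open convex set $\Gamma_k$, it is the infimum of its tangent planes there: for $A=D^2u(x)\in\Gamma_k$,
\[
f_k(A)=\inf_{B\in\Gamma_k}\{Df_k(B)(A-B)+f_k(B)\},
\]
with equality attained at $B=A$. Here $Df_k(B)$ is the differential, regarded as a linear functional on matrices. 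Differentiating the homogeneity relation $f_k(tB)=tf_k(B)$ at $t=1$ gives Euler's identity $Df_k(B)B=f_k(B)$, exactly as in the Monge--Amp\`ere computation, so the formula collapses to $f_k(A)=\inf_{B\in\Gamma_k}Df_k(B)A$.

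I would then represent the functional $Df_k(B)$ by a matrix $M=M(B)$. Expanding in the basis $\{E_{ij}\}$ as in the proof of Proposition \ref{MongeAmpere} yields $Df_k(B)A=trace(M(B)A)$ with $M(B)_{ij}=Df_k(B)E_{ij}=\partial f_k/\partial b_{ij}$, and the chain rule gives
\[
\frac{\partial f_k}{\partial b_{ij}}=\frac1k\,\sigma_k(B)^{\frac1k-1}\,\frac{\partial \sigma_k}{\partial b_{ij}}=\frac{1}{kf_k(B)^{k-1}}\,\frac{\partial\sigma_k}{\partial b_{ij}}.
\]
The explicit entries \eqref{Mii}, \eqref{Mij3}, \eqref{Mij2} then follow from the identity $\sigma_k(B)=\sum_{|I|=k}\det B_{II}$ (the sum over principal $k\times k$ submatrices) together with cofactor expansion of $\partial(\det B_{II})/\partial b_{ij}$: when $i=j$ only the submatrices containing the index $i$ contribute, each producing a $(k-1)\times(k-1)$ principal minor, giving \eqref{Mii}; when $i\neq j$ one differentiates the minors containing both indices, producing the signed minors $\det B_{(j,j_1,\dots,j_{k-2})(i,j_1,\dots,j_{k-2})}$ of \eqref{Mij3}, which for $k=2$ collapse to $-b_{ji}/(2f_2(B))$ as in \eqref{Mij2}. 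This determinant bookkeeping is the main computational step.

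Finally I would verify that $M(B)$ is symmetric and positive definite. Symmetry is immediate: since $B$ is symmetric, $(B_{RC})^T=B_{CR}$ for equal-size index sets $R,C$, so $\det B_{(j,j_1,\dots)(i,j_1,\dots)}=\det B_{(i,j_1,\dots)(j,j_1,\dots)}$, while the index set $\{j_1,\dots,j_{k-2}\}$ avoiding $\{i,j\}$ is symmetric in $i$ and $j$; hence $M(B)_{ij}=M(B)_{ji}$ (and $-b_{ji}/(2f_2(B))=-b_{ij}/(2f_2(B))$ when $k=2$). Positive definiteness is the main obstacle. I would reduce it to a diagonal computation via $O(n)$-equivariance: $f_k$ depends on $B$ only through its eigenvalues, so $M(RBR^T)=RM(B)R^T$ for $R\in O(n)$, and it suffices to treat $B=\mathrm{diag}(\lambda_1,\dots,\lambda_n)$ with $\lambda\in\Gamma_k$. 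For such $B$ the off-diagonal entries of $M(B)$ vanish and $M(B)_{ii}=\frac{1}{kf_k(B)^{k-1}}\,\sigma_{k-1}(\lambda_1,\dots,\widehat{\lambda_i},\dots,\lambda_n)$; this is strictly positive because $f_k(B)>0$ on $\Gamma_k$ and, by the standard property of the G\aa rding cones that $\lambda\in\Gamma_k$ forces the deleted $(n-1)$-tuple into $\Gamma_{k-1}$, one has $\sigma_{k-1}(\lambda_1,\dots,\widehat{\lambda_i},\dots,\lambda_n)>0$. Hence $M(B)>0$ for every $B\in\Gamma_k$. Setting $\mathcal{M}_k=\{M(B):B\in\Gamma_k\}$ and using surjectivity of $B\mapsto M(B)$ onto $\mathcal{M}_k$ gives $f_k(D^2u)=\inf_{M\in\mathcal{M}_k}trace(MD^2u)$, which is the assertion; the strict positivity just obtained is exactly what makes the fractional operators of Definition \ref{def1} elliptic.
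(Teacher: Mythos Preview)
Your proof is correct and follows essentially the same route as the paper: both use concavity and degree-one homogeneity to collapse the tangent-plane formula to $f_k(A)=\inf_{B\in\Gamma_k}\mathrm{trace}(M(B)A)$, compute the entries of $M(B)=Df_k(B)$ via the basis $\{E_{ij}\}$, and then reduce positive definiteness of $M(B)$ to the diagonal case via orthogonal invariance. Your treatment is slightly more explicit in places---you invoke the identity $\sigma_k(B)=\sum_{|I|=k}\det B_{II}$ to derive the entry formulas and the G\aa rding-cone property that $\lambda\in\Gamma_k$ forces the deleted tuple into $\Gamma_{k-1}$ for positivity, where the paper simply cites ellipticity of $f_k$---but the argument is the same.
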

\begin{proof}[Proof of Proposition \ref{kHessian}]


Since $f_k$ is a concave function of $\lambda=(\lambda_1,\lambda_2,...,\lambda_n)\in \Gamma_k$, with $\lambda_j, j=1,2,...,n$ eigenvalues of matrix A, we can write
\[
f_k(A)=\inf_{B\in \Gamma_k}{\{Df_k(B)(A-B)+f_k(B)\}}.
\]
Here $Df_k(B): \mathbb{R}^{n \times n} \to \mathbb{R}$ is an operator defined by
\[
Df_k(B)A=\lim_{\epsilon \to 0}{\frac{f_k(B+\epsilon A)-f_k(B)}{\epsilon}}.
\]
Take a basis $\{E_{ij}\}_{i,j=1}^n$ of $\mathbb{R}^{n\times n}$, that $E_{ij}$ is a matrix with $i,j$ th entry being 1, and all other entries being 0, we can calculate that

\[
Df_k(B)E_{ii}=\frac{1}{kf_k(B)^{k-1}} \sum_{\mbox{\tiny$\begin{array}{c}
i\leq j_1<j_2<...<j_{k-1}\leq n,\\
j_1,...,j_{k-1} \neq i \end{array}$}}\det{B_{(j_1,...,j_{k-1})}},
\]
where $B_{(j_1,...,j_{k-1})}$ denotes the submatrix of $B$ formed by choosing the $j_1,j_2,...,j_{k-1}$th rows and columns.\\
When $k \geq 3$,
\[
Df_k(B)E_{ij}=-\frac{1}{kf_k(B)^{k-1}} \sum_{\mbox{\tiny$\begin{array}{c}
i\leq j_1<j_2<...<j_{k-2}\leq n,\\
j_1,...,j_{k-2} \neq i,j \end{array}$}}\det{B_{(j,j_1,...,j_{k-2})(i,j_1,...,j_{k-2})}},
\]
where $B_{(j,j_1,...,j_{k-2})(i,j_1,...,j_{k-2})}$ denotes the submatrix of $B$ formed by choosing the $j,j_1,j_2,...,j_{k-2}$th rows and  $i,j_1,j_2,...,j_{k-2}$th columns.\\
And when $k=2$,
\[
Df_2(B)E_{ij}=-\frac{1}{2f_2(B)}b_{ji},
\]
where $b_{ji}$ denotes the $j,i$th entry of matrix $B$.\\

Define a matrix $M \in \mathbb{R}^{n \times n}$ where
\[
M_{ii}=Df_k(B)E_{ii},
\]
\[
M_{ij}=Df_k(B)E_{ij}.
\]
And we write $M=M(B)=Df_k(B)$ to denote this relation between matrix $B$ and $M$. Then for any matrix $A \in \mathbb{R}^{n \times n}$, $A=a_{ij}E_{ij}$, by linearity, 
\[
Df_k(B)A=a_{ij}Df_k(B)E_{ij}=a_{ij}M_{ij}=trace(AM^T).
\]
Moreover, since $f_k$ is homogeneous of degree 1, so
\[
Df_k(B)B=f_k(B).
\]
And therefore,
\begin{equation*}
\begin{aligned}
f_k(A)&=\inf_{B\in \mathbb{R}^{n\times n}}{\{Df_k(B)(A-B)+f_k(B)\}} \\
&=\inf_{B\in \mathbb{R}^{n\times n}}{\{trace(AM^T),M=M(B)\}} \\
&=\inf_{M \in \mathcal{M}_k}{\{trace(AM^T)\}}.
\end{aligned}
\end{equation*}
We can write the set
\begin{equation*}
\begin{aligned}
\mathcal{M}_k=\{& M\in \mathbb{R}^{n \times n}, \ \text{exist}  \ B \in \Gamma_k, M=Df_k(B)=M(B)\}.
\end{aligned}
\end{equation*}
Actually, $\mathcal{M}_k$ is the image set of all matrices in $\Gamma_k$ under the mapping 
\[
B \mapsto M=M(B)=Df_k(B),
\]
and a matrix $M \in \mathcal{M}_k$ if there exists a matrix $B \in \Gamma_k$ such that with entries of $M$ satisfying \eqref{Mii}, \eqref{Mij3} (when $k \geq 3$) or \eqref{Mij2} (when $k=2$).\\

Without loss of generality, we can assume $M$ to be symmetric. Assume the matrix $B$ has eigenvalues $\lambda_1, \lambda_2,...,\lambda_n$ and since $f_k$ is invariant under orthonormal transformation, that $f_k(B)=f_k(Q^T\Lambda Q)$, with $\Lambda$ be the diagonal matrix with diagonal entries $\lambda_1, \lambda_2,...,\lambda_n$. Then the matrix $M=Df_k(B)$ has same eigenvalues as $Df_k(\Lambda)$. And since $f_k$ is elliptic, thus the $i$th diagonal entry of $Df_k(\Lambda)$ satisfies
\[
(Df_k(\Lambda))_{ii}=\lim_{\epsilon \to 0}{\frac{f_k(\Lambda+\epsilon E_{ii})-f_k(\Lambda)}{\epsilon}}>0.
\]
Therefore, if $B \in {\Gamma_k}$, then $M=Df_k(B)$ is a positive matrix. In particular, if $B=diag\{\sigma_1,\sigma_2,...,\sigma_n\}$ and $f_k(B)=1$, then
\[
M=Df_k(B)=diag\{\lambda_1,\lambda_2,...,\lambda_n\}
\]
with
\[
\lambda_i=\frac{1}{k}(\sum_{1\leq i_1<...<i_{k-1}\leq n, i_j \neq i}{\sigma_{i_1}\sigma_{i_2}...\sigma_{i_{k-1}}}).
\]
\end{proof}

From Proposition \ref{kHessian}, we write
\begin{equation*}
\begin{aligned}
f_k(D^2u(x))&=\inf_{M \in \mathcal{M}_k}{\{trace(D^2u(x)M^T)\}}\\
&=\inf_{M \in \mathcal{M}_k}{\{trace(\sqrt{M}^TD^2u(x)\sqrt{M})\}} \\
&=\inf_{M \in \mathcal{M}_k}{\{\Delta(u\circ \sqrt{M})(x)\}},
\end{aligned}
\end{equation*}
Then it is natural to give Definition \ref{def1} of fractional k-Hessian operator by writing
\[
F_{k,s}[u](x)=\inf_{M \in \mathcal{M}_k}{\{-C_{n,s}^{-1}(-\Delta)^{s}(u\circ \sqrt{M})(x)\}}.
\]
   
   \section{The main mathematical results}
   \label{main}

In this section we will prove Theorem \ref{main theorem}, that when $k=2$, the infimum in the definition \eqref{eq1} of $F_{k,s}$, cannot be realized by matrices that are too degenerate, which proves that the fractional 2-Hessian operator is locally uniformly elliptic. Then we can apply theories for uniformly elliptic non-local operators such as Evans-Krylov theorem to our fractional 2-Hessian operators, to get $C^{2,\alpha}$ estimates for global solutions prescribing data at infinity and global barriers, and further to prove that such solutions are classical.\\

Our aim is to prove that as $\epsilon \to 0$,
\[
\inf_{M \in \mathcal{M}_k}{\{-C_{n,s}^{-1}(-\Delta)^{s}(u\circ \sqrt{M})(x),  \lambda_{min}(M)=\epsilon\}} \to \infty.
\]
And this will show that the infimum cannot be realized by matrices that are too degenerate, which is the result of Theorem \ref{main theorem}. To prove this, we want to consider the integral on $\partial B^n_r(0)$ as an average of integrals on $\partial B^{n-1}_r(0)$. Consider a unit vector
\[
\tilde{e}(\theta)=(0,0,...,0,\sin \theta, \cos \theta),
\]
with $\theta \in (-\pi/2,\pi/2]$. Then
\[
span\{\tilde{e}(\theta)\}^{\perp}=span \{ \tilde{e}_1,\tilde{e}_2,...,\tilde{e}_{n-1}\}
\]
with $\tilde{e}_j, j=1,2,...,n-1$ be the orthonormal basis of the $n-1$ dimensional perpendicular space. Especially, we can consider
\[
\tilde{e}_j=(0,0,...,0,1,0,...,0) \ \  j=1,2,...,n-2,
\]
and
\[
\tilde{e}_{n-1}=(0,0,...,0,\cos \theta,-\sin \theta).
\]
Then for any $y \in \partial B^n_r(0)$, and $y \perp \tilde{e}(\theta)$, we can write $y=(y_1,y_2,...,y_n)$ as 
\[
y=z_1\tilde{e}_1+z_2\tilde{e}_2+...+z_{n-1}\tilde{e}_{n-1},
\]
and therefore,
\[
y_j=z_j, \ \  j=1,2,...,n-2,
\]
\[
 y_{n-1}=z_{n-1}\cos \theta,
\]
\[
y_n=-z_{n-1}\sin \theta.
\]

Now let $M \in \mathcal{M}_2$, $\sqrt{M}^{-1}=diag\{\lambda_1,\lambda_2,...,\lambda_n\}$, assume 
\[
\lambda_1 \leq \lambda_2 \leq ...\leq \lambda_n=\epsilon^{-1/2},
\]
and write integral in $\mathbb{R}^n$ as an average of $(n-1)$-dimensional subspace perpendicular to $\tilde{e}(\theta)$, $-\pi/2<\theta\leq \pi/2$, that
\begin{equation*}
\begin{aligned}
I&=\prod{\lambda_j}\int_{\mathbb{R}^n}{\frac{u(y)-u(0)}{(\lambda_1^2y_1^2+...+\lambda_n^2y_n^2)^{\frac{n+2s}{2}}}dy}\\
&=\prod{\lambda_j}\int_{-\pi/2}^{\pi/2}{\int_{0}^{\infty}{\int_{x\in \partial B^{n-1}_1(0),x\perp \tilde{e}(\theta)}{\frac{u(r(x_1\tilde{e}_1+...+x_{n-1}\tilde{e}_{n-1}))-u(0)}{r^{1+2s}(\lambda_1^2x_1^2+...+(\lambda_{n-1}^2\cos^2\theta+\lambda_n^2\sin^2 \theta)x_{n-1}^2)^{\frac{n+2s}{2}}}dx}dr} d\theta}\\
&=\prod{\lambda_j}\int_{-\theta_0}^{\theta_0}{...d\theta}+\prod{\lambda_j}\int_{\theta_0<|\theta|\leq \pi/2}{...d\theta}\\
&=I_1+I_2.\\
\end{aligned}
\end{equation*}
Our aim is to show that as $\epsilon \to 0$, $I_1 \to \infty$(Proposition \ref{I1}), and $I_2 \geq 0$(Proposition \ref{I2}).\\

We need to prove the fractional laplacian of the restriction of u to any $(n-1)$-dimensional subspace is positive and bounded from above:

\begin{proposition}
\label{prop1}
Assume that $u$ satisfies all conditions in Theorem \ref{main theorem}, then
\[
0<\mu_0\leq (1-s)\int_{\mathbb{R}^{n-1}}{\frac{u(z_1e_1+z_2e_2+...+z_{n-1}e_{n-1})-u(0)}{|\bar{z}|^{n-1+2s}}d\bar{z}}\leq \mu_1.
\]
for each orthonormal basis $\{e_j\}_{j=1}^{n-1}$ of $\mathbb{R}^{n-1}$, where
\[
\mu_0=\mu_0(\eta_0,n,s,L,SC)
\]
given by \eqref{mu0}, 
and
\[
\mu_1=\mu_1(n,s,L,SC)
\]
given by \eqref{mu1}.
\end{proposition}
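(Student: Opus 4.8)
The plan is to establish the two-sided bound on the $(n-1)$-dimensional fractional Laplacian of $u$ restricted to an arbitrary hyperplane through the origin, using only the hypotheses of Theorem~\ref{main theorem}: Lipschitz continuity with constant $L$, semiconcavity with constant $SC$, and the viscosity inequality $(1-s)F_{2,s}[u](0)\geq \eta_0$. By rotational invariance of the fractional Laplacian it suffices to work with the coordinate hyperplane spanned by $e_1,\dots,e_{n-1}$; write $v(\bar z)=u(\bar z,0)$ for $\bar z\in\mathbb{R}^{n-1}$, which is again Lipschitz with constant $L$ and semiconcave with constant $SC$.

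\emph{Upper bound.} The key point is that the integrand is controlled in two regimes. For $|\bar z|\leq 1$ I would split the integral symmetrically, pairing $\bar z$ with $-\bar z$ so that the numerator becomes the symmetrized second difference $\tfrac12\delta(v,0,\bar z)\leq \tfrac{SC}{2}|\bar z|^2$, which is integrable against $|\bar z|^{-(n-1+2s)}$ near the origin since $2s<2$; this contributes a term bounded by $C(n)\,SC/(2(1-s))\cdot\frac{1}{1-s}$-free constant — more precisely $C(n)\,SC \cdot \frac{1}{2-2s}$, so multiplying by $(1-s)$ gives something bounded by $C(n)SC$. For $|\bar z|\geq 1$ the numerator $v(\bar z)-v(0)$ is bounded by $L|\bar z|$, and $\int_{|\bar z|\geq 1}|\bar z|^{1-(n-1+2s)}\,d\bar z = C(n)/(2s-1)$, which is finite for $s>1/2$ and, crucially, bounded uniformly as $s\to 1$; the factor $(1-s)$ then makes this term $\to 0$. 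Combining, one reads off an explicit $\mu_1=\mu_1(n,s,L,SC)$ (this is the constant the paper labels \eqref{mu1}), stable as $s\to 1$.

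\emph{Lower bound.} This is the more substantial half and is where the hypothesis \eqref{eq1} enters. The idea is that the test matrices $M\in\mathcal{M}_2$ include, up to the homogeneity normalization, diagonal matrices that degenerate in all but one direction; equivalently, feeding into $F_{2,s}[u](0)$ a matrix $\sqrt{M}^{-1}$ that is the identity on the chosen hyperplane and very large in the normal direction forces the corresponding anisotropic integral to concentrate, in the limit, onto the $(n-1)$-dimensional slice integral. Quantitatively, for such a highly anisotropic $M$ the integral $-C_{n,s}^{-1}(-\Delta)^s(u\circ\sqrt{M})(0)$ is, after the change of variables already performed in Definition~\ref{def1}, comparable to the $(n-1)$-dimensional integral $\int_{\mathbb{R}^{n-1}}\frac{v(\bar z)-v(0)}{|\bar z|^{n-1+2s}}\,d\bar z$ plus an error that is controlled by $L$ and $SC$ and vanishes in the degenerate limit. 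Since $F_{2,s}[u](0)$ is the infimum over \emph{all} $M\in\mathcal{M}_2$, the inequality $(1-s)F_{2,s}[u](0)\geq\eta_0$ gives a lower bound on every such integral, and in particular a lower bound $(1-s)\int_{\mathbb{R}^{n-1}}\cdots\,d\bar z \geq \mu_0$ with $\mu_0$ extracted from $\eta_0$ minus the (bounded) error terms; one must check that the normalization constants $C_{n,s}$ versus $C_{n-1,s}$, and the $(1-s)$ weights, are tracked so that $\mu_0$ stays positive and independent of the particular hyperplane.

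The main obstacle is the lower bound: one has to make precise the claim that the family $\mathcal{M}_2$ (or its closure) really does contain matrices degenerating to an arbitrarily chosen hyperplane with the right normalization, and then carry out the limiting argument that identifies the degenerate-limit of the $n$-dimensional anisotropic integral with the $(n-1)$-dimensional slice integral, uniformly controlling the transverse contribution using Lipschitz and semiconcavity bounds. The split by scales ($|\bar z|\lessgtr 1$) and the careful bookkeeping of the dimensional constant $C_{n,s}$ and the $(1-s)$ factor — so that both $\mu_0$ and $\mu_1$ are stable as $s\to 1$ — are the routine but essential computations; the structural input is entirely the observation that degenerate test matrices probe lower-dimensional slices.
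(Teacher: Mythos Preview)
Your overall strategy matches the paper's: the upper bound comes directly from Lipschitz/semiconcavity, and the lower bound comes from testing \eqref{eq1} against a one-parameter family of matrices in $\mathcal{M}_2$ that degenerate transversally to the chosen hyperplane, so that the $n$-dimensional anisotropic integral splits as a transverse error $J_1$ (controlled by $L,SC$) plus a multiple of the slice integral $J_2$. This is exactly what the paper does via Lemmas~\ref{lemma1} and~\ref{lemma2}.

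However, two points in your sketch, if taken literally, would not go through. First, you cannot take $\sqrt{M}^{-1}$ to be ``the identity on the hyperplane and very large in the normal direction''. The constraint defining $\mathcal{M}_2$ couples the eigenvalues: if $M=Df_2(B)$ with $B=\mathrm{diag}(\sigma_1,\dots,\sigma_n)$ normalized by $\sigma_2(B)=1$, then the normal eigenvalue $\eta_n=\epsilon$ forces the tangential eigenvalues $\eta_j\sim 1/(4\epsilon)$, so that $\sqrt{M}^{-1}$ has tangential entries $g(\epsilon)\sim 2\sqrt{\epsilon}\to 0$, not $1$. The paper builds this family explicitly. Second, and as a consequence, the slice integral does not appear in $J_2$ with a fixed constant but with the factor $g(\epsilon)^{-2s}\sim\epsilon^{-s}$; inverting, the resulting lower bound on the slice integral is
\[
\int_{\mathbb{R}^{n-1}}\frac{u(\bar z,0)-u(0)}{|\bar z|^{n-1+2s}}\,d\bar z \;\geq\; \frac{g(\epsilon)^{2s}}{C_3}\Bigl(\tfrac{\eta_0}{1-s}-\epsilon^sC_1C_2\Bigr),
\]
which tends to $0$ as $\epsilon\to 0$. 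So the ``degenerate limit'' gives nothing; one must instead fix a specific $\epsilon=\epsilon_1$ (the paper takes $\epsilon_1^s=\eta_0/(2(1-s)C_1C_2)$, making the parenthesis equal to $\eta_0/(2(1-s))$) and read off $\mu_0$ at that scale. Once you correct these two quantitative points, your argument is the paper's argument.
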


\begin{proposition}\label{I1}
Assume that $u$ satisfies all conditions in Theorem \ref{main theorem}. When $M \in \mathcal{M}_2$, $\sqrt{M}^{-1}=diag\{\lambda_1,\lambda_2,...,\lambda_n\}$ and $\lambda_{min}(M)=\epsilon$, the integral
\begin{equation*}
\begin{aligned}
I_1&=\prod{\lambda_j}\int_{-\theta_0}^{\theta_0}{\int_{0}^{\infty}{\int_{x\in \partial B^{n-1}_1(0),x\perp \tilde{e}(\theta)}{\frac{u(r(x_1\tilde{e}_1+...+x_{n-1}\tilde{e}_{n-1}))-u(0)}{r^{1+2s}(\lambda_1^2x_1^2+...+(\lambda_{n-1}^2\cos^2\theta+\lambda_n^2\sin^2 \theta)x_{n-1}^2)^{\frac{n+2s}{2}}}dx}dr}d\theta}\\
& \geq \frac{C_4\mu_0}{1-s} \epsilon^{-s}.
\end{aligned}
\end{equation*}
Here $C_4=C_4(n,s,\eta_0,L,SC)$ is given by \eqref{C4}.
\end{proposition}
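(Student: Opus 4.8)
The plan is to estimate $I_1$ from below by reducing the $n$-dimensional integral to a weighted average of the $(n-1)$-dimensional fractional Laplacians controlled in Proposition \ref{prop1}. First I would fix $\theta$ with $|\theta|\le \theta_0$ and look at the inner double integral over $r\in(0,\infty)$ and $x\in\partial B^{n-1}_1(0)$ with $x\perp\tilde e(\theta)$. Writing $\bar z = r(x_1\tilde e_1+\dots+x_{n-1}\tilde e_{n-1})$ and reassembling the $r^{1+2s}$ factor together with the spherical measure, the $r$- and $x$-integrals recombine into a single integral over the $(n-1)$-plane $\mathrm{span}\{\tilde e_1,\dots,\tilde e_{n-1}\}$ of the form
\[
\int_{\mathbb{R}^{n-1}}\frac{u(\bar z)-u(0)}{\bigl(\lambda_1^2 z_1^2+\dots+(\lambda_{n-1}^2\cos^2\theta+\lambda_n^2\sin^2\theta)z_{n-1}^2\bigr)^{\frac{n+2s}{2}}}\,d\bar z .
\]
The denominator is an ellipsoidal weight; after the linear change of variables diagonalizing it (i.e. rescaling each $z_j$ by the corresponding coefficient), this becomes $c(\theta,\lambda)\int_{\mathbb{R}^{n-1}}\frac{u(T_\theta\bar w)-u(0)}{|\bar w|^{n-1+2s}}\,d\bar w$ for a linear map $T_\theta$ that is a composition of an orthonormal frame and a diagonal scaling, and a positive constant $c(\theta,\lambda)$ equal to the reciprocal of the product of the axis lengths. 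Absorbing the diagonal scaling of $T_\theta$ back into the weight, the key point is that the integrand is exactly of the shape handled by Proposition \ref{prop1} once we observe that $u$ composed with a diagonal matrix is still Lipschitz and semiconcave (with controlled constants) — but rather than tracking those changing constants, the cleaner route is to note that for a generic $(n-1)$-dimensional ellipsoidal weight one can lower-bound the integral by $\mu_0$ times the reciprocal of the product of the semi-axes, because Proposition \ref{prop1} gives a uniform lower bound over all orthonormal bases, and the ellipsoidal weight is a convex combination (in the appropriate sense) of such spherical weights with different radii.

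Next I would carry out the $\theta$-integration. After the reduction above, $I_1$ has the form
\[
I_1 \;\ge\; \prod_j \lambda_j \,\cdot\, \frac{\mu_0}{1-s}\int_{-\theta_0}^{\theta_0}\frac{d\theta}{\lambda_1\lambda_2\cdots\lambda_{n-2}\,\bigl(\lambda_{n-1}^2\cos^2\theta+\lambda_n^2\sin^2\theta\bigr)^{1/2}} ,
\]
where the product $\prod_j\lambda_j$ in front cancels all the $\lambda_j$ with $j\le n-2$ and one power coming from the pair $(\lambda_{n-1},\lambda_n)$, leaving
\[
I_1 \;\ge\; \frac{\mu_0}{1-s}\,\lambda_{n-1}\lambda_n\int_{-\theta_0}^{\theta_0}\frac{d\theta}{\bigl(\lambda_{n-1}^2\cos^2\theta+\lambda_n^2\sin^2\theta\bigr)^{1/2}} .
\]
Now use $\lambda_n=\epsilon^{-1/2}$ (the largest) and $\lambda_{n-1}\le\lambda_n$: on $|\theta|\le\theta_0$ with $\theta_0$ small, $\cos^2\theta$ is bounded below by a fixed constant, and $\lambda_{n-1}^2\cos^2\theta+\lambda_n^2\sin^2\theta\le \lambda_n^2$, so the integrand is $\ge \lambda_{n-1}\lambda_n/\lambda_n = \lambda_{n-1}$ — but that loses the needed $\epsilon^{-s}$ blow-up, so instead I bound $\lambda_{n-1}^2\cos^2\theta+\lambda_n^2\sin^2\theta \le \lambda_n^2(\cos^2\theta+\sin^2\theta)=\lambda_n^2$ only where it helps and keep the genuine dependence: writing the integral as $\ge \lambda_{n-1}\lambda_n\cdot 2\theta_0/\lambda_n = 2\theta_0\lambda_{n-1}$ is too weak, so the correct estimate keeps $\lambda_{n-1}\lambda_n\int_{-\theta_0}^{\theta_0}(\lambda_{n-1}^2+\lambda_n^2\theta^2)^{-1/2}\,d\theta \gtrsim \lambda_n\log(1+\lambda_n\theta_0/\lambda_{n-1})$ when $\lambda_{n-1}\ll\lambda_n$, which already gives $\lambda_n=\epsilon^{-1/2}$ growth up to logs; to get the clean power $\epsilon^{-s}$ one must instead not separate $\theta$ and the $r$-integral so early — one keeps an $r$-cutoff depending on $\theta$ and exploits the full $n+2s$ homogeneity, which is exactly where the exponent $s$ (rather than $1/2$) enters. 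Concretely, the $r$-integral over $(0,\infty)$ of $r^{-1-2s}\cdot(\text{stuff}(r))$ is where the power $\epsilon^{-s}$ is produced: rescaling $r\mapsto \lambda_n r$ in the $\tilde e_{n-1}$-direction pulls out $\lambda_n^{2s}=\epsilon^{-s}$.

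The main obstacle, and the step I expect to require the most care, is precisely this bookkeeping of how the anisotropy $\lambda_n=\epsilon^{-1/2}\to\infty$ survives both the $r$-integration and the $\theta$-averaging to produce the sharp rate $\epsilon^{-s}$ with a constant $C_4$ independent of $\epsilon$ and stable as $s\to1$. One has to choose $\theta_0$ (and possibly an $r$-truncation) so that on the region $\{|\theta|\le\theta_0\}$ the contribution of $u(\bar z)-u(0)$ is genuinely comparable to the restricted fractional Laplacian controlled by $\mu_0$ — this uses Lipschitz continuity to discard the far-field and semiconcavity together with \eqref{eq1} to keep the near-field positive — and simultaneously so that the ellipsoidal normalization constant $c(\theta,\lambda)\cdot\prod_j\lambda_j$ is bounded below by $c(n,s)\,\epsilon^{-s}$. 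I would isolate this as a lemma: for the diagonal weight with axes $\lambda_1\le\dots\le\lambda_n=\epsilon^{-1/2}$ restricted to angles $|\theta|\le\theta_0$, the prefactor times the $\theta$-integral is $\ge c(n)\,\theta_0\,\epsilon^{-s}$, after which multiplying by $\mu_0/(1-s)$ and setting $C_4 = C_4(n,s,\eta_0,L,SC)$ to collect $\theta_0$ and the fixed dimensional constants yields the claimed bound $I_1\ge \frac{C_4\mu_0}{1-s}\epsilon^{-s}$.
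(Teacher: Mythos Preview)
Your proposal has two genuine gaps, and both are structural rather than bookkeeping.

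First, the anisotropic $(n-1)$-dimensional integral cannot be lower-bounded by $\mu_0/(1-s)$ via the ``convex combination of spherical weights'' idea: the integrand $u(\bar z)-u(0)$ is not sign-definite, so replacing an ellipsoidal weight by a spherical one helps on one sign set and hurts on the other. The paper instead splits the $(n-1)$-dimensional fractional integral into its positive part $A$ and non-positive part $B$, uses Proposition~\ref{prop1} to get $A+B\ge\mu_0/(1-s)$ and $A\le\mu_1/(1-s)$, and then bounds the angular weight $\lambda_1^2x_1^2+\dots+(\lambda_{n-1}^2\cos^2\theta+\lambda_n^2\sin^2\theta)x_{n-1}^2$ from above on the positive set and from below on the negative set. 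After choosing $\theta_0=C\lambda_{n-1}/\lambda_n$ this yields
\[
(1-s)I_1\;\ge\;2C\,\lambda_1\cdots\lambda_{n-2}\lambda_{n-1}^2\Bigl(\frac{\mu_0}{\lambda_1^{n+2s}}+\mu_1\Bigl(\frac{C_5}{\lambda_{n-1}^{n+2s}}-\frac{1}{\lambda_1^{n+2s}}\Bigr)\Bigr),
\]
and the second term is negative. Controlling it requires $\lambda_1$ and $\lambda_{n-1}$ to be comparable, which nothing in your outline provides.

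Second, and this is the heart of the matter, you never use the hypothesis $M\in\mathcal{M}_2$ beyond $\lambda_{\min}(M)=\epsilon$. Writing $M=Df_2(B)$ with $B=\mathrm{diag}(\sigma_1,\dots,\sigma_n)$ and $\sigma_2(B)=1$, the relations $\eta_j=\frac12\sum_{i\ne j}\sigma_i$ together with $\eta_n=\epsilon$ force $\sigma_1+\dots+\sigma_{n-1}=2\epsilon$; a short computation then gives $\eta_1\ge(1+Q^2/2)/(4\epsilon)$ and $\eta_1-\eta_{n-1}=O(\epsilon)$, where $Q=\sigma_2+\dots+\sigma_{n-1}$. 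Thus \emph{all} of $\lambda_1,\dots,\lambda_{n-1}$ are of size $\sqrt{\epsilon}$ and mutually comparable. The factor $\epsilon^{-s}$ in the conclusion comes from $\lambda_1^{-2s}=\eta_1^{s}\gtrsim\epsilon^{-s}$, not from a rescaling by $\lambda_n$ as you suggest; and the near-equality of $\lambda_1,\dots,\lambda_{n-1}$ is exactly what makes the negative term above harmless. Without this algebraic input from $\mathcal{M}_2$ the estimate cannot close, which is why your attempts produce only a logarithm or lose the exponent entirely.
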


\begin{proposition}\label{I2}
Assume that $u$ satisfies all conditions in Theorem \ref{main theorem}. For each $M \in \mathcal{M}_2$, $\sqrt{M}^{-1}=diag\{\lambda_1,\lambda_2,...,\lambda_n\}$, the integral
\[
\prod{\lambda_j}\int_{\bar{y}\in \mathbb{R}^{n-1}}{\frac{u(y_1,y_2,...,y_{n-1},0)-u(0)}{(\lambda_1^2y_1^2+...+\lambda_{n-1}^2y_{n-1}^2)^{\frac{n+2s-1}{2}}}d\bar{y}} \geq 0.
\]
And this shows
\begin{equation*}
\begin{aligned}
I_2&=\prod{\lambda_j}\int_{|\theta|\geq \theta_0}{\int_{0}^{\infty}{\int_{x\in \partial B^{n-1}_1(0),x\perp \tilde{e}(\theta)}{\frac{u(r(x_1\tilde{e}_1+...+x_{n-1}\tilde{e}_{n-1}))-u(0)}{r^{1+2s}(\lambda_1^2x_1^2+...+(\lambda_{n-1}^2\cos^2\theta+\lambda_n^2\sin^2 \theta)x_{n-1}^2)^{\frac{n+2s}{2}}}dx}dr}d\theta}\\
&\geq 0
\end{aligned}
\end{equation*}

\end{proposition}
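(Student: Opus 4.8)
The statement to prove is Proposition \ref{I2}: for $M \in \mathcal{M}_2$ with $\sqrt{M}^{-1} = \mathrm{diag}\{\lambda_1,\dots,\lambda_n\}$, the $(n-1)$-dimensional integral of the second difference of $u$ restricted to the hyperplane $\{y_n = 0\}$ is nonnegative, and consequently $I_2 \ge 0$.

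\textbf{Plan.} The core observation should be that the restriction of $u$ to any hyperplane through the origin inherits a structural inequality from $(1-s)F_{2,s}[u] \ge \eta_0 > 0$. The operator $F_{2,s}[u](0)$ is an infimum over $M \in \mathcal{M}_2$ of weighted fractional Laplacians of $u \circ \sqrt{M}$; since $\mathcal{M}_2$ for $f_2$ contains matrices of essentially arbitrarily degenerate shape in the directions transverse to a fixed hyperplane (one can take $B \in \Gamma_2$ with one eigenvalue very large and renormalize, so that the corresponding $M$ concentrates its mass on an $(n-1)$-dimensional subspace — this is exactly the content behind Proposition \ref{prop1}), the positivity of $F_{2,s}[u]$ forces the restricted operator on each hyperplane to be positive. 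More concretely, I would argue as in \cite{C1}: because $u$ is semiconcave, $F_{2,s}[u](0)$ is classically defined and finite, hence $-C_{n,s}^{-1}(-\Delta)^s(u\circ\sqrt{M})(0) \ge F_{2,s}[u](0) > 0$ for every admissible $M$; letting $M$ degenerate so that $\sqrt{M}^{-1}$ approaches a diagonal matrix with $\lambda_n \to \infty$ while the other $\lambda_j$ stay fixed, the $n$-dimensional integral $\prod\lambda_j \int_{\mathbb{R}^n} \delta(u,0,y)/|\sqrt{M}^{-1}y|^{n+2s}\,dy$ converges (after the change of variables splitting off the $y_n$ direction) to a constant multiple of $\prod_{j<n}\lambda_j \int_{\mathbb{R}^{n-1}} \delta(u,0,\bar y)/(\lambda_1^2 y_1^2 + \dots + \lambda_{n-1}^2 y_{n-1}^2)^{(n-1+2s)/2}\,d\bar y$, and this limit must still be $\ge 0$.

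\textbf{Key steps in order.} First I would set up the slicing change of variables: write $y = (\bar y, y_n)$, use the formula $\int_{\mathbb{R}} (a^2 + \lambda_n^2 t^2)^{-(n+2s)/2}\,dt = c(n,s)\,\lambda_n^{-1} a^{-(n-1+2s)}$ for $a^2 = \lambda_1^2y_1^2 + \dots + \lambda_{n-1}^2 y_{n-1}^2$, so that the $\lambda_n$ factor in $\prod \lambda_j$ cancels against the $\lambda_n^{-1}$ from the inner integral and the constant $c(n,s)$ emerges. This reduces the claim, up to a harmless positive constant, to nonnegativity of the weighted $(n-1)$-dimensional integral of $\delta(u,0,\bar y)$ — except that the one-dimensional integral must be interpreted carefully since $\delta(u,0,y)$ depends on $y_n$ too; so more precisely I would bound $\delta(u, 0, (\bar y, y_n)) \ge \delta(u,0,\bar y) - C y_n^2$ using semiconcavity in the $y_n$ slices (the paraboloid touching from above), observe that the $Cy_n^2$ error integrates to something controlled, and in the degenerating limit $\lambda_n\to\infty$ the error term vanishes. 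Second, I would justify that the limit of $-C_{n,s}^{-1}(-\Delta)^s(u\circ\sqrt{M})(0)$ along this degenerating family is finite and $\ge 0$: finiteness follows because Lipschitz + semiconcave controls $\delta(u,0,y)$ both above (by $SC|y|^2$) and the tail by $2L|y|$, making the integrand dominated uniformly in $\lambda_n$ on the relevant region after rescaling; and each term in the family is $\ge F_{2,s}[u](0) > 0$ because these $M$ (or $M$ arbitrarily close to them, after an $\varepsilon$-perturbation keeping $B \in \Gamma_2$) lie in $\mathcal{M}_2$. Third, passing to the limit (dominated convergence) gives the desired $(n-1)$-dimensional inequality, in fact with a strictly positive lower bound, though $\ge 0$ is all that is claimed here. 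Finally, for the statement about $I_2$ itself: on the region $\theta_0 \le |\theta| \le \pi/2$, I would note that the inner $(n-1)$-dimensional integral over $x \perp \tilde e(\theta)$ with the anisotropic weight $(\lambda_1^2x_1^2 + \dots + (\lambda_{n-1}^2\cos^2\theta + \lambda_n^2\sin^2\theta)x_{n-1}^2)^{(n+2s)/2}$ is, after rescaling in $r$ and relabeling, exactly an $(n-1)$-dimensional integral of the same type as just shown nonnegative (with $\lambda_{n-1}^2\cos^2\theta + \lambda_n^2\sin^2\theta$ playing the role of one of the squared coefficients and the effective dimension/exponent matching), so $I_2$ is an average over $\theta$ of nonnegative quantities, hence $\ge 0$.

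\textbf{Main obstacle.} The delicate point is handling the $y_n$-dependence of $\delta(u,0,(\bar y,y_n))$ cleanly rather than pretending $u$ is constant in $y_n$: I expect the main work to be in controlling the cross-term/error so that the degenerating limit genuinely reproduces the pure $(n-1)$-dimensional integral, and in verifying that the family of matrices I degenerate along really stays inside (or in the closure of, with a safe $\varepsilon$-perturbation inside) $\mathcal{M}_2$, so that the lower bound $F_{2,s}[u](0) > 0$ legitimately applies termwise. A secondary but routine obstacle is the bookkeeping of the geometric slicing constants so that the powers of $\lambda_n$ cancel exactly; this mirrors the corresponding computation in \cite{C1} for the Monge--Amp\`ere case and should go through with only notational changes, since the only difference from that setting is the admissible matrix set $\mathcal{M}_2$ versus $\mathcal{M}$, and $\mathcal{M}_2$ is likewise rich enough in the relevant degenerate directions (as recorded in the last display of the proof of Proposition \ref{kHessian}).
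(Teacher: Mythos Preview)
Your overall strategy --- degenerate $M$ within $\mathcal{M}_2$ so that the $n$-dimensional integral collapses onto the $(n-1)$-dimensional one, split off the $y_n$-dependence via the Lipschitz/semiconcavity bounds, and inherit nonnegativity from $(1-s)F_{2,s}[u]\ge\eta_0$ --- is exactly the paper's approach. The paper packages it as a contradiction (assume the $(n-1)$-integral equals $-A<0$ and drive the $n$-integral to $-\infty$), whereas you write it as a direct limit; that difference is purely cosmetic, and your splitting $P_1/P_2$ with the one-variable integral $\int_{\mathbb{R}}(a^2+\lambda_n^2 t^2)^{-(n+2s)/2}\,dt$ is the same computation the paper carries out.

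There is, however, a genuine gap in the specific degeneration you propose. You want to keep $\lambda_1,\dots,\lambda_{n-1}$ (equivalently $\eta_1,\dots,\eta_{n-1}$) fixed and send $\eta_n\to 0$, asserting that $\mathcal{M}_2$ is ``rich enough'' for this by analogy with the Monge--Amp\`ere set $\mathcal{M}$. It is not. For $n=3$ and $M=\mathrm{diag}(3,2,1)\in\mathcal{M}_2$ (the associated $B$ has $\sigma=(0,2,4)$, $\sigma_2(B)=8$), replacing $\eta_3=1$ by $\epsilon$ gives $\sigma=(\epsilon-1,\,1+\epsilon,\,5-\epsilon)$ and $\sigma_2(B)=-\epsilon^2+10\epsilon-1$, which is negative for small $\epsilon$; the family exits $\Gamma_2$ and hence $\mathcal{M}_2$. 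This is precisely where the $2$-Hessian case is \emph{not} a notational change from Monge--Amp\`ere.

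The paper's remedy is to scale the first $n-1$ eigenvalues simultaneously: it builds $\tilde B=\mathrm{diag}(t_1\sigma_1,\dots,t_{n-1}\sigma_{n-1},g(t)\sigma_n)\in\Gamma_2$ by solving a small system so that $\tilde\eta_j=\eta_j/t$ for $j<n$ and $\tilde\eta_n=h(t)\eta_n$ with $h(t)\to 0$. The point is that the $(n-1)$-dimensional integral with weights $\tilde\lambda_j=\sqrt{t}\,\lambda_j$ differs from the one with weights $\lambda_j$ only by the positive factor $t^{-s}$, so the sign is unaffected and the contradiction (or, in your formulation, the limit) closes. Once you substitute this degeneration for yours, your direct-limit argument is equivalent to the paper's.
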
 

Proposition \ref{I1} and Proposition \ref{I2} together prove the main theorem:
\begin{proof}[Proof of Theorem \ref{main theorem}]
Let $P$ be an orthogonal matrix such that 
\[
P^T\sqrt{M}^{-1}P=J=diag\{\lambda_1,...,\lambda_n\}.
\]
and $M \in \mathcal{M}_2$, with $\lambda_{\min}(M)=\epsilon$. then by Proposition \ref{I1} and Proposition \ref{I2},
\begin{equation}\label{degenerate}
\begin{aligned}
&\int_{\mathbb{R}^n}{\frac{u(y)-u(0)}{|\sqrt{M}^{-1}y|^{n+2s}}\det{\sqrt{M}^{-1}}dy}\\
&=\prod_{j=1}^{n}{\lambda_j}\int_{\mathbb{R}^n}{\frac{u(y)-u(0)}{|\lambda_1^2y_1^2+...+\lambda_n^2y_n^2|^{(n+2s)/2}}dy}\\
& = I_1+I_2\\
& \geq \frac{C_4\mu_0}{1-s} \epsilon^{-s}+0= \frac{C_4\mu_0}{1-s} \epsilon^{-s}.
\end{aligned}
\end{equation}
\ \\
Also, since $I \in \Gamma_2$, so
\[
M_0=Df_2(I)=\sqrt{\frac{n-1}{2n}}I\in \mathcal{M}_2.
\]
we can obtain 
\begin{equation}\label{example}
\begin{aligned}
F_{2,s}[u](0)&=\inf_{M \in \mathcal{M}_2}{\{P.V.\int_{\mathbb{R}^n}{\frac{u(y)-u(0)}{|\sqrt{M}^{-1}y|^{n+2s}}\det{\sqrt{M}^{-1}}dy}\}}\\
&\leq \int_{\mathbb{R}^n}{\frac{u(y)-u(0)}{|\sqrt{M_0}^{-1}y|^{n+2s}}\det{\sqrt{M_0}^{-1}}dy}\\
&=(\frac{n-1}{2n})^{s/2}\int_{\mathbb{R}^n}{\frac{u(y)-u(0)}{|y|^{n+2s}}dy}\\
&\leq(\frac{n-1}{2n})^{s/2}\frac{\mu_1}{1-s},
\end{aligned}
\end{equation}
here the last inequality is proved by Proposition \ref{prop1}.\\
Therefore, when $\epsilon$ is small enough, for instance, when
\begin{equation*}
\epsilon <\sqrt{\frac{2n}{n-1}}C_4^{1/s}(\frac{\mu_0}{\mu_1})^{1/s},
\end{equation*}
we can see
\begin{equation}\label{inequality}
\frac{C_4\mu_0}{1-s} \epsilon^{-s} >(\frac{n-1}{2n})^{s/2}\frac{\mu_1}{1-s}.
\end{equation}
Now we take 
\begin{equation*}
\epsilon_0=\sqrt{\frac{n}{n-1}}C_4^{1/s}(\frac{\mu_0}{\mu_1})^{1/s} <\sqrt{\frac{2n}{n-1}}C_4^{1/s}(\frac{\mu_0}{\mu_1})^{1/s}.
\end{equation*}
Combining \eqref{degenerate},  \eqref{example} and \eqref{inequality}, we can obtain
\begin{equation*}
\begin{aligned}
&\inf_{M \in \mathcal{M}_2}{\{\frac{1}{2}\int_{\mathbb{R}^n}{\frac{\delta(u,0,y)}{|\sqrt{M}^{-1}y|^{n+2s}}\det{\sqrt{M}^{-1}}dy}, \lambda_{min}(M)\leq \epsilon_0 \}}\\
&\geq \frac{C_4\mu_0}{1-s} \epsilon_0^{-s} \\
&>(\frac{n-1}{2n})^{s/2}\frac{\mu_1}{1-s}\\
&\geq F_{2,s}[u](0).
\end{aligned}
\end{equation*}
Therefore,
\[
\inf_{M \in \mathcal{M}_2}{\{\frac{1}{2}\int_{\mathbb{R}^n}{\frac{\delta(u,0,y)}{|\sqrt{M}^{-1}y|^{n+2s}}\det{\sqrt{M}^{-1}}dy}, \lambda_{min}(M)\leq \epsilon_0 \}}> F_{2,s}[u](0),
\]
and thus,
\[
F_{2,s}[u](0)=F_{2,s}^{\epsilon_0}[u](0),
\]
with 
\begin{equation}\label{theta}
\epsilon_0=\epsilon_0(n,s,\eta_0,S,LC)=\sqrt{\frac{n}{n-1}}C_4^{1/s}(\frac{\mu_0}{\mu_1})^{1/s}.
\end{equation}
And
\[
C_4=C_4(n,s,\eta_0,L,SC) 
\]
given by \eqref{C4}.
And this completes the proof for Theorem \ref{main theorem}.
\end{proof}

We will use the following lemmas to prove Proposition \ref{prop1}.\\
Take a matrix $B \in \Gamma_2$, that 
\[
B=diag \{\frac{2}{n-1}\epsilon,\frac{2}{n-1}\epsilon,...,\frac{2}{n-1}\epsilon,h(\epsilon)\}.
\]
And find $h(\epsilon)$ such that
\[
\sigma_2(B)=2\epsilon h(\epsilon)+\frac{2(n-2)}{n-1}\epsilon^2=1,
\]
and this means
\[
h(\epsilon)=\frac{1-\frac{2(n-2)}{n-1}\epsilon^2}{2\epsilon},
\]
and when $\epsilon$ is small enough, $h(\epsilon)\approx \frac{1}{2\epsilon}$.
Then as defined,
\begin{equation*}
\begin{aligned}
M(B)=\frac{1}{2\sigma_2(B)^{1/2}} diag \{\frac{2(n-2)}{n-1}\epsilon+h(\epsilon), \frac{2(n-2)}{n-1}\epsilon+h(\epsilon),...,\frac{2(n-2)}{n-1}\epsilon+h(\epsilon),2\epsilon\}.
\end{aligned}
\end{equation*}
So write $\sqrt{M}^{-1}=diag\{g(\epsilon),g(\epsilon),...,g(\epsilon),\epsilon^{-1/2} \}$, where
\[
g(\epsilon)=(\frac{n-2}{n-1}\epsilon+\frac{h(\epsilon)}{2})^{-1/2}.
\]
And we can see that $g(\epsilon)\approx 2\sqrt{\epsilon}$ when $\epsilon$ is very small. Then, since $M\in \mathcal{M}_2$, thus by the equation \eqref{eq1}
\begin{equation*}
\begin{aligned}
0<\frac{\eta_0}{1-s} &\leq \det(\sqrt{M}^{-1}) \int_{\mathbb{R}^n}{\frac{u(\bar{y},y_n)-u(0)}{(g(\epsilon)^2|\bar{y}|^2+\frac{1}{\epsilon}y_n^2)^{\frac{n+2s}{2}}}dy} \\
&=g(\epsilon)^{n-1}\epsilon^{-1/2}\int_{\mathbb{R}^n}{\frac{u(\bar{y},y_n)-u(\bar{y},0)}{(g(\epsilon)^2|\bar{y}|^2+\frac{1}{\epsilon}y_n^2)^{\frac{n+2s}{2}}}dy}+g(\epsilon)^{n-1}\epsilon^{-1/2}\int_{\mathbb{R}^n}{\frac{u(\bar{y},0)-u( 0)}{(g(\epsilon)^2|\bar{y}|^2+\frac{1}{\epsilon}y_n^2)^{\frac{n+2s}{2}}}dy}\\
&=J_1+J_2.
\end{aligned}
\end{equation*}

Lemma \ref{lemma1} will give an estimate of $J_1$ by semi-concavity and Lipschitz continuity of $u$.

\begin{lemma}\label{lemma1}
Assume that $u$ satisfies all conditions in Theorem \ref{main theorem}. Take  $\sqrt{M}^{-1}=diag\{g(\epsilon),g(\epsilon),...,g(\epsilon),\epsilon^{-1/2} \}$, then
\[
J_1=g(\epsilon)^{n-1}\epsilon^{-1/2}\int_{\mathbb{R}^n}{\frac{u(\bar{y},y_n)-u(\bar{y},0)}{(g(\epsilon)^2|\bar{y}|^2+\frac{1}{\epsilon}y_n^2)^{\frac{n+2s}{2}}}dy} \leq \epsilon^s C_1C_2,
\]
where $C_1=C_1(s,L,SC)$ and $C_2=C_2(n,s)$ are given by \eqref{C_1} and \eqref{C_2} respectively.
\end{lemma}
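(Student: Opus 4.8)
The plan is to exploit the evenness of the weight in the $y_n$-variable to turn the first-order increment $u(\bar y,y_n)-u(\bar y,0)$ into a second-order increment, to which semiconcavity applies, and then to rescale so that the weight becomes radial. First I would read $J_1$ as a principal value and symmetrize in $y_n$: since $\big(g(\epsilon)^2|\bar y|^2+y_n^2/\epsilon\big)^{(n+2s)/2}$ is even in $y_n$, pairing the contributions of $y_n$ and $-y_n$ gives
\begin{equation*}
J_1=g(\epsilon)^{n-1}\epsilon^{-1/2}\int_{\mathbb{R}^{n-1}}\int_{0}^{\infty}\frac{\delta\big(u,(\bar y,0),(0,y_n)\big)}{\big(g(\epsilon)^2|\bar y|^2+y_n^2/\epsilon\big)^{(n+2s)/2}}\,dy_n\,d\bar y.
\end{equation*}
Semiconcavity bounds the numerator by $SC\,y_n^2$, and Lipschitz continuity bounds it by $2L\,y_n$ for $y_n>0$, so the numerator is at most $\min\{SC\,y_n^2,\,2L\,y_n\}$. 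It is essential to symmetrize before estimating: the quadratic bound is what makes the integrand integrable near the origin (using $s<1$), while the linear bound is what makes the tail integrable (using $s>1/2$), and this is exactly where the hypothesis $1/2<s<1$ enters.

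Next I would substitute $\xi_i=g(\epsilon)\,y_i$ for $i=1,\dots,n-1$ and $\xi_n=\epsilon^{-1/2}y_n$, whose Jacobian is precisely $g(\epsilon)^{n-1}\epsilon^{-1/2}$ and therefore cancels the prefactor; the weight becomes $|\xi|^{-(n+2s)}$ and $y_n$ becomes $\sqrt\epsilon\,\xi_n$, reducing the estimate to
\begin{equation*}
J_1\le\int_{\{\xi\in\mathbb{R}^n:\ \xi_n>0\}}\frac{\min\{SC\,\epsilon\,\xi_n^2,\ 2L\sqrt\epsilon\,\xi_n\}}{|\xi|^{n+2s}}\,d\xi.
\end{equation*}
Passing to polar coordinates $\xi=\rho\,\omega$ with $\rho>0$ and $\omega$ in the upper half of $S^{n-1}$, for each fixed $\omega$ I split the $\rho$-integral at $\rho_\ast(\omega)=2L/(SC\sqrt\epsilon\,\omega_n)$, the value where the two bounds agree: on $\{\rho<\rho_\ast\}$ use the quadratic bound (the radial integrand is a multiple of $\rho^{1-2s}$, integrable since $s<1$), on $\{\rho>\rho_\ast\}$ use the linear bound (the radial integrand is a multiple of $\rho^{-2s}$, integrable since $s>1/2$). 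A direct computation shows each piece equals a constant times $\epsilon^{s}\,\omega_n^{2s}$, with the $L$ and $SC$ dependence factoring out; integrating in $\omega$ then yields $J_1\le C_1C_2\,\epsilon^s$, where $C_1=C_1(s,L,SC)$ is the constant produced by the two radial integrals and $C_2=C_2(n,s)=\int_{S^{n-1}\cap\{\omega_n>0\}}\omega_n^{2s}\,d\omega$.

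The remainder is mechanical. The only genuinely delicate point, and the one I would check most carefully, is the interplay already noted: one must symmetrize (replacing $u$ by $\delta$) before invoking either bound, since the first-order increment alone is not absolutely integrable near the origin once $s\ge 1/2$; and one must split the radial integral exactly at $\rho_\ast$ so that the near-field contribution (quadratic, needing $s<1$) and the far-field contribution (linear, needing $s>1/2$) both produce the same power $\epsilon^s$.
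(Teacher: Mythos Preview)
Your argument is correct and follows the same overall scheme as the paper---symmetrize in $y_n$, bound the second increment by $\min\{SC\,y_n^2,\,2L|y_n|\}$, then change variables to extract the factor $\epsilon^s$---but the change of variables you choose is different. The paper sets $z_n=y_n$ and $z_j=\sqrt{\epsilon}\,g(\epsilon)\,y_j/|y_n|$ for $j<n$, which factors the bound in one stroke into a one--dimensional integral in $z_n$ (this is $C_1$) times an $(n-1)$--dimensional integral in $\bar z$ (this is $C_2$); no polar coordinates and no $\omega$--dependent splitting point $\rho_\ast(\omega)$ are needed. Your route through the diagonal rescaling $\xi=\sqrt{M}^{-1}y$ followed by polar coordinates works equally well, at the price of the extra (easy) verification that the radial integral produces the common factor $\omega_n^{2s}$ in every direction. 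Two minor remarks: your symmetrization step is explicit whereas the paper's is tacit, and your resulting constants are not literally those labeled \eqref{C_1}--\eqref{C_2}, though they depend on the same parameters and your spherical $C_2$ actually coincides with the paper's after the substitution $|\bar z|=\tan\varphi$. Finally, you are right to take $\min$; the $\max$ printed in the paper would not be integrable.
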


\begin{proof}
By Lipschitz continuity and semi-concavity of $u$,
\[
J_1 \leq g(\epsilon)^{n-1}\epsilon^{-1/2}\int_{\mathbb{R}^n}{\frac{\max \{2L|y_n|,SC|y_n|^2\}}{(g(\epsilon)^2|\bar{y}|^2+\frac{1}{\epsilon}y_n^2)^{\frac{n+2s}{2}}}dy},
\]
then we can do change of variables, letting
\[
z_n=y_n, z_j=\frac{y_j}{|y_n|}\sqrt{\epsilon}g(\epsilon), j=1,2,...,n-1.
\]
Then
\[
dz=dy\frac{1}{|y_n|^{n-1}}(\sqrt{\epsilon}g(\epsilon))^{n-1},
\]
and
\begin{equation*}
\begin{aligned}
I_1&\leq g(\epsilon)^{n-1}\epsilon^{-1/2}(\sqrt{\epsilon}g(\epsilon))^{1-n}\int_{\mathbb{R}^n}{\frac{\max \{2L|z_n|,SC|z_n|^2\}}{(1+|\bar{z}|^2)^{\frac{n+2s}{2}}|z_n|^{n+2s-n+1}\epsilon^{-(n+2s)/2}}d\bar{z}dz_n}\\
&\leq \epsilon^s \int_{\mathbb{R}}{\frac{\max \{2L|z_n|,SC|z_n|^2\}}{|z_n|^{1+2s}}dz_n} \int_{\mathbb{R}^{n-1}}{\frac{1}{(1+|\bar{z}|^2)^{\frac{n+2s}{2}}}d\bar{z}}\\
& \leq \epsilon^s C_1C_2.
\end{aligned}
\end{equation*}
Here we define two constants $C_1,C_2$ by following:
\begin{equation}\label{C_1}
C_1=C_1(s,L,SC)=\int_{\mathbb{R}}{\frac{\max \{2L|z_n|,SC|z_n|^2\}}{|z_n|^{1+2s}}dz_n},
\end{equation}
\begin{equation}\label{C_2}
C_2=C_2(n,s)=\int_{\mathbb{R}^{n-1}}{\frac{1}{(1+|\bar{z}|^2)^{\frac{n+2s}{2}}}d\bar{z}}.
\end{equation}
\end{proof}
Then Lemma \ref{lemma2} gives an estimate of the integral $J_2$.

\begin{lemma}\label{lemma2}
Assume that $u$ satisfies all conditions in Theorem \ref{main theorem}. Take  $\sqrt{M}^{-1}=diag\{g(\epsilon),g(\epsilon),...,g(\epsilon),\epsilon^{-1/2} \}$, then
\[
J_2=g(\epsilon)^{-2s}C_3\int_{\mathbb{R}^{n-1}}{\frac{u(\bar{z},0)-u(0)}{|\bar{z}|^{n+2s-1}}d\bar{z}},
\]
where $C_3=C_3(n,s)$ are given by \eqref{C3}.
\end{lemma}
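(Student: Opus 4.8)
The goal of Lemma \ref{lemma2} is to reduce the integral $J_2$, which lives in $\mathbb{R}^n$ but whose integrand $u(\bar{y},0)-u(0)$ depends only on the first $n-1$ coordinates, to an explicit constant times an $(n-1)$-dimensional integral of the same form that appears in Proposition \ref{prop1}. The plan is a direct computation: integrate out the $y_n$ variable first, then rescale the remaining $\bar{y}$ variable to normalize the coefficient $g(\epsilon)$.

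\textbf{Step 1: separate the $y_n$-integration.} Since the numerator $u(\bar{y},0)-u(0)$ does not involve $y_n$, Fubini's theorem lets me write
\[
J_2 = g(\epsilon)^{n-1}\epsilon^{-1/2}\int_{\mathbb{R}^{n-1}}\bigl(u(\bar{y},0)-u(0)\bigr)\left(\int_{\mathbb{R}}\frac{dy_n}{\bigl(g(\epsilon)^2|\bar{y}|^2+\tfrac{1}{\epsilon}y_n^2\bigr)^{\frac{n+2s}{2}}}\right)d\bar{y}.
\]
In the inner integral substitute $y_n = \sqrt{\epsilon}\,g(\epsilon)|\bar{y}|\,t$, which gives $dy_n = \sqrt{\epsilon}\,g(\epsilon)|\bar{y}|\,dt$ and turns the denominator into $g(\epsilon)^{n+2s}|\bar{y}|^{n+2s}(1+t^2)^{\frac{n+2s}{2}}$. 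Collecting the powers, the inner integral equals $\sqrt{\epsilon}\,g(\epsilon)^{1-n-2s}|\bar{y}|^{1-n-2s}\int_{\mathbb{R}}(1+t^2)^{-\frac{n+2s}{2}}dt$.

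\textbf{Step 2: collect constants.} Multiplying back by the prefactor $g(\epsilon)^{n-1}\epsilon^{-1/2}$, the $\epsilon^{-1/2}$ cancels against $\sqrt{\epsilon}$, and the powers of $g(\epsilon)$ combine to $g(\epsilon)^{(n-1)+(1-n-2s)} = g(\epsilon)^{-2s}$. What remains is
\[
J_2 = g(\epsilon)^{-2s}\left(\int_{\mathbb{R}}\frac{dt}{(1+t^2)^{\frac{n+2s}{2}}}\right)\int_{\mathbb{R}^{n-1}}\frac{u(\bar{z},0)-u(0)}{|\bar{z}|^{n+2s-1}}\,d\bar{z},
\]
after renaming $\bar{y}$ to $\bar{z}$, so that
\begin{equation}\label{C3}
C_3 = C_3(n,s) = \int_{\mathbb{R}}\frac{dt}{(1+t^2)^{\frac{n+2s}{2}}}.
\end{equation}
This is exactly the claimed identity, with the $(n-1)$-dimensional integral being the one controlled by Proposition \ref{prop1}.

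\textbf{Anticipated obstacle.} The computation itself is routine; the only point requiring a little care is justifying the use of Fubini and the convergence of the inner $y_n$-integral near $\bar{y}=0$, where $|\bar{z}|^{1-n-2s}$ is singular. Since $s<1$ and the exponent $n+2s-1$ on $|\bar{z}|$ is the same one already appearing in Proposition \ref{prop1}, integrability of the outer integral is inherited from that proposition together with the Lipschitz/semiconcavity bounds on $u$ (which give $|u(\bar{z},0)-u(0)|\leq L|\bar{z}|$ near the origin, killing the singularity since $n+2s-1 < n$, i.e. the effective exponent $n+2s-2 < n-1$); one should note that the principal-value interpretation is harmless because the integrand is even in $\bar z$ up to the second-order increment, exactly as in the definition of $F_{2,s}$. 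So the ``hard part'' is really only bookkeeping: making sure the $g(\epsilon)$ and $\epsilon$ powers are tracked correctly through the two substitutions so that the net factor is precisely $g(\epsilon)^{-2s}$ with no residual $\epsilon$.
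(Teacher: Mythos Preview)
Your proof is correct and follows essentially the same route as the paper: the paper performs the single change of variables $z_j=y_j$ for $j\le n-1$ and $z_n=(\sqrt{\epsilon}\,g(\epsilon)|\bar y|)^{-1}y_n$, which is exactly your Fubini step followed by the substitution $y_n=\sqrt{\epsilon}\,g(\epsilon)|\bar y|\,t$, and arrives at the same constant $C_3$ and the same power $g(\epsilon)^{-2s}$. Your additional remarks on integrability near $\bar y=0$ are a welcome justification that the paper leaves implicit.
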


\begin{proof}

\[
J_2=g(\epsilon)^{n-1}\epsilon^{-1/2}\int_{\mathbb{R}^n}{\frac{u(\bar{y},0)-u( 0)}{(g(\epsilon)^2|\bar{y}|^2+\frac{1}{\epsilon}y_n^2)^{\frac{n+2s}{2}}}dy}.
\]
By change of variables
\[
 z_j=y_j, \ \ j=1,2,...,n-1,
\]
\[
z_n=(\sqrt{\epsilon}g(\epsilon))^{-1}\frac{y_n}{|\bar{y}|},
\]
we will get
\[
dz=dy(\sqrt{\epsilon}g(\epsilon)|\bar{y}|)^{-1},
\]
and
\begin{equation*}
\begin{aligned}
J_2&=g(\epsilon)^{n-1}\epsilon^{-1/2}\int_{\mathbb{R}^n}{\frac{u(\bar{y},0)-u( 0)}{(g(\epsilon)^2|\bar{y}|^2+\frac{1}{\epsilon}y_n^2)^{\frac{n+2s}{2}}}dy}\\
&=(\sqrt{\epsilon}g(\epsilon))^{-1}g(\epsilon)^{n-1}\epsilon^{-1/2}\int_{\mathbb{R}^n}{\frac{u(\bar{z},0)-u(0)}{g(\epsilon)^{n+2s}|\bar{z}|^{n+2s-1}(1+z_n^2)^{\frac{n+2s}{2}} }d\bar{z}dz_n}\\
&=g(\epsilon)^{-2s}\int_{\mathbb{R}^{n-1}}{\frac{u(\bar{z},0)-u(0)}{|\bar{z}|^{n+2s-1}}d\bar{z}}\int_{\mathbb{R}}{\frac{1}{(1+z_n^2)^{\frac{n+2s}{2}} }dz_n}\\
&=g(\epsilon)^{-2s}C_3\int_{\mathbb{R}^{n-1}}{\frac{u(\bar{z},0)-u(0)}{|\bar{z}|^{n+2s-1}}d\bar{z}}.
\end{aligned}
\end{equation*}
Here we define a constant $C_3$ by the following:
\begin{equation}\label{C3}
C_3=C_3(n,s)=\int_{\mathbb{R}}{\frac{1}{(1+z_n^2)^{\frac{n+2s}{2}} }dz_n}.
\end{equation}
\end{proof}

Then combining the estimates for $J_1$ and $J_2$, we can prove Proposition \ref{prop1}:
\begin{proof}
From the equation, we can see
\[
0<\frac{\eta_0}{1-s}\leq J_1+J_2\leq \epsilon^sC_1C_2+g(\epsilon)^{-2s}C_3\int_{\mathbb{R}^{n-1}}{\frac{u(\bar{z},0)-u(0)}{|\bar{z}|^{n+2s-1}}d\bar{z}},
\]
and therefore,
\[
\int_{\mathbb{R}^{n-1}}{\frac{u(\bar{z},0)-u(0)}{|\bar{z}|^{n+2s-1}}d\bar{z}} \geq \frac{\frac{\eta_0}{1-s}-\epsilon^sC_1C_2}{C_3g(\epsilon)^{-2s}}.
\]
So we only need to take $\epsilon=\epsilon_1$ small enough such that
\[
\eta_0=2(1-s)C_1C_2\epsilon_1^s, 
\]
that
\[
\epsilon_1=(\frac{\eta_0}{2(1-s)C_1C_2})^{1/s},
\]
then
\[
\int_{\mathbb{R}^{n-1}}{\frac{u(\bar{z},0)-u(0)}{|\bar{z}|^{n+2s-1}}d\bar{z}} \geq \frac{\eta_0}{2(1-s)C_3}g(\epsilon_1)^{2s}.
\]
And we have calculated that
\[
g(\epsilon)=(\frac{1}{4\epsilon}+\frac{n-2}{2(n-1)}\epsilon)^{-1/2},
\]
thus
\[
g(\epsilon_1)^{2s}=(\frac{1}{4\epsilon_1}+\frac{n-2}{2(n-1)}\epsilon_1)^{-s},
\]
and we can define
\begin{equation}\label{mu0}
\mu_0=\mu_0(n,s,\eta_0,L,SC)=\frac{\eta_0}{2(1-s)C_3}g(\epsilon_1)^{2s},
\end{equation}
we obtain the estimates that
\[
\int_{\mathbb{R}^{n-1}}{\frac{u(\bar{z},0)-u(0)}{|\bar{z}|^{n+2s-1}}d\bar{z}} \geq \mu_0>0.
\]
And by doing any orthonormal transformation, we will be able to show if $\{e_j\}_{j=1}^{n-1}$ are orthonomarl basis of $\mathbb{R}^{n-1}$,
\[
\int_{\mathbb{R}^{n-1}}{\frac{u(z_1e_1+z_2e_2+...+z_{n-1}e_{n-1})-u(0)}{|\bar{z}|^{n+2s-1}}d\bar{z}}\geq \mu_0>0.
\]
On the other hand, if $u$ is Lipschitz continuous and semi-concave, then
\[
\int_{\mathbb{R}^{n-1}}{\frac{u(\bar{z},0)-u(0)}{|\bar{z}|^{n+2s-1}}d\bar{z}} \leq \int_{\mathbb{R}^{n-1}}{\frac{\max\{2L|\bar{z}|,SC|\bar{z}|^2\}}{|\bar{z}|^{n+2s-1}}d\bar{z}}\leq \frac{\mu_1}{1-s},
\]
with
\begin{equation}\label{mu1}
\mu_1=\mu_1(n,s,L,SC)=(1-s)\int_{\mathbb{R}^{n-1}}{\frac{\max\{2L|\bar{z}|,SC|\bar{z}|^2\}}{|\bar{z}|^{n+2s-1}}d\bar{z}}.
\end{equation}
\end{proof}

With the estimates in Proposition \ref{prop1}, now we start to prove Proposition \ref{I1}.
The main idea is that, when the smallest eigenvalue of matrix $M$ is close to 0, there will be some contraints on the eigenvalues and their square root inverse $\lambda_j$, since the matrix is in the set $\mathcal{M}_2$. We will prove that $\frac{1}{\lambda_{1}^{n+2s}}-\frac{1}{\lambda_{n-1}^{n+2s}}$ is very small compared with $\frac{1}{\lambda_1^{n+2s}}$. This and the lower bound in Proposition \ref{prop1} will make it possible to prove that the integral on a (n-1)-dimensional subspace,  close to $\{x_n=0\}$, is very large. 

\begin{proof}[Proof of Proposition \ref{I1}]
Our aim is to show that when $\epsilon$ is very small, $I_1 \geq C_4\mu_0 \epsilon^{-s}$.
We take $\theta_0=C\frac{\lambda_{n-1}}{\lambda_n}$ which is very small $(\theta_0\leq 2C\epsilon)$ and the constant $C$ depends on $\frac{\mu_1}{\mu_0}$, determined by \eqref{C}. When $|\theta|\leq \theta_0$,
\[
\lambda_{n-1}^2\cos^2\theta+\lambda_n^2\sin^2\theta \leq (1+C^2)\lambda_{n-1}^2
\]
and thus,
\[
(1-4C^2\epsilon^2)\lambda_1^2\leq \lambda_1^2x_1^2+...(\lambda_{n-1}^2\cos^2\theta+\lambda_n^2\sin^2 \theta)x_{n-1}^2 \leq (1+C^2)\lambda_{n-1}^2.
\]
Let
\[
A=\int_{0}^{\infty}{\int_{\{x\in  \partial B_1^{n-1}(0),u(rx)-u(0)>0\}}{\frac{u(rx)-u(0)}{r^{1+2s}}dx} dr}\geq 0,
\]
and
\[
B=\int_{0}^{\infty}{\int_{\{x \in  \partial B_1^{n-1}(0),u(rx)-u(0)\leq 0\}}{\frac{u(rx)-u(0)}{r^{1+2s}}dx} dr}\leq 0.
\]
Then by Proposition \ref{prop1},
\[
A+B\geq \frac{\mu_0}{1-s}>0,
\]
and 
\[
A\leq \frac{\mu_1}{1-s}.
\]
And we can have the following estimates
\begin{equation*}
\begin{aligned}
(1-s)I_1&=\prod{\lambda_j}\int_{-\theta_0}^{\theta_0}{\int_{0}^{\infty}{\int_{x\in \partial B_1^{n-1}(0)}{\frac{u(rx)-u(0)}{r^{1+2s}}\frac{1}{( \lambda_1^2x_1^2+...(\lambda_{n-1}^2\cos^2\theta+\lambda_n^2\sin^2 \theta)x_{n-1}^2)^{\frac{n+2s}{2}}}dx}dr}d\theta}\\
&\geq 2\theta_0\prod{\lambda_j}(A\frac{(1+C^2)^{-(n+2s)/2}}{\lambda_{n-1}^{n+2s}}+B \frac{1}{\lambda_1^{n+2s}})\\
&\geq (2C\lambda_1...\lambda_{n-2}\lambda_{n-1}^2)(\frac{\mu_0}{\lambda_1^{n+2s}}+\mu_1(\frac{(1+C^2)^{-(n+2s)/2}}{\lambda_{n-1}^{n+2s}}-\frac{1}{\lambda_1^{n+2s}}))\\
&\geq 2C\lambda_1^n(\frac{\mu_0+\mu_1(C_5-1)}{\lambda_1^{n+2s}}+C_5\mu_1(\frac{1}{\lambda_{n-1}^{n+2s}}-\frac{1}{\lambda_{1}^{n+2s}})).
\end{aligned}
\end{equation*}
Here 
\[
C_5=(1+C^2)^{-(n+2s)/2}
\]
and take constant $C$ such that
\[
\mu_0+\mu_1(C_5-1)\geq \mu_0/2,
\]
i.e.,
take
\begin{equation}\label{C}
C=\sqrt{ (1-\frac{\mu_0}{2\mu_1})^{\frac{-2}{n+2s}}-1}
\end{equation}
and 
\begin{equation}\label{C5}
C_5=1-\frac{\mu_0}{2\mu_1}.
\end{equation}

Now let's see what constraint we will have on $\lambda_j$ when the smallest eigenvalue of matrix $M \in \mathcal{M}_2$ is $\epsilon$. We want to show that the non-negative $\frac{1}{\lambda_{1}^{n+2s}}-\frac{1}{\lambda_{n-1}^{n+2s}}$ is very small compared with $\frac{1}{\lambda_1^{n+2s}}$.\\

Let $B=diag\{\sigma_1,\sigma_2,...,\sigma_n\} \in \Gamma_2$. Assume $\sigma_1 \leq \sigma_2\leq...\leq \sigma_n$, and $\sum{\sigma_i\sigma_j}=1$. Then $M=diag \{\eta_1,\eta_2,...,\eta_n\}=Df_2(B)$, with $\eta_1 \geq \eta_2\geq...\geq \eta_n=\epsilon$, and
\[
\eta_j=\frac{1}{2}(\sum_i{\sigma_i}-\sigma_j).
\]
Then
\[
\sigma_1+\sigma_2+...+\sigma_{n-1}=2\epsilon=2\eta_n.
\]
Let $Q=\sigma_2+\sigma_3+...+\sigma_{n-1}$. Then $Q>\frac{2(n-2)}{n-1}\epsilon$. And since $\sum{\sigma_i\sigma_j}=1$, so
\begin{equation*}
\begin{aligned}
1&=\sigma_n(\sum_{i=1}^{n-1}{\sigma_i})+\sum_{1\leq i<j\leq n-1}{\sigma_i\sigma_j}\\
&=\sigma_n(2\epsilon)+\sigma_1(Q)+\sum_{2\leq i<j\leq n-1}{\sigma_i\sigma_j}\\
&\leq 2\epsilon \sigma_n+(2\epsilon-Q)Q+\frac{Q^2}{2}\\
&=2\epsilon \sigma_n+2\epsilon Q-\frac{Q^2}{2}.
\end{aligned}
\end{equation*}
Then
\[
\sigma_n \geq \frac{1+Q^2/2-2\epsilon Q}{2\epsilon}.
\]
And therefore
\[
\eta_1=\frac{1}{2}(Q+\sigma_n)\geq \frac{1+Q^2/2}{4\epsilon}.
\]
In addition, since $\sigma_1=2\epsilon-Q$, and $\sigma_{n-1}=2\epsilon-\sigma_1-\sigma_2-...-\sigma_{n-1}\leq 2\epsilon-(n-2)\sigma_1$, so
\[
0\geq \sigma_1-\sigma_{n-1}\geq (2n-4)\epsilon-(n-1)Q,
\]
and this means
\[
\eta_{n-1}-\eta_1 \geq (2n-4)\epsilon-(n-1)Q.
\]
Thereforem we can calculate
\[
\frac{1}{\lambda_{n-1}^{n+2s}}-\frac{1}{\lambda_{1}^{n+2s}}\geq \frac{n+2s}{2}\frac{1}{\lambda_1^{n+2s-2}}(\frac{1}{\lambda_{n-1}^{2}}-\frac{1}{\lambda_{1}^{2}})\geq \frac{n+2s}{2}\frac{1}{\lambda_1^{n+2s-2}}( (2n-4)\epsilon-(n-1)Q).
\]
Therefore,
\begin{equation*}
\begin{aligned}
(1-s)I_1&\geq 2C\lambda_1^n(\frac{\mu_0+\mu_1(C_5-1)}{\lambda_1^{n+2s}}+C_5\mu_1(\frac{1}{\lambda_{n-1}^{n+2s}}-\frac{1}{\lambda_{1}^{n+2s}}))\\
& \geq \frac{C\mu_0}{2}\eta_1^s+ \frac{C\mu_0}{2}\eta_1^s+C_5\mu_1(n+2s)\eta_1^{s-1}( (2n-4)\epsilon-(n-1)Q)\\
& \geq \frac{C\mu_0}{2}\epsilon^{-s}+\eta_1^{s-1}(\frac{C\mu_0}{2}\eta_1+C_6\epsilon-C_7Q)\\
& \geq \frac{C\mu_0}{2}\epsilon^{-s}+\eta_1^{s-1}(\frac{C\mu_0}{2}\frac{1+Q^2/2}{4\epsilon}+C_6\epsilon-C_7Q)\\
&\geq \frac{C\mu_0}{2}\epsilon^{-s}+\eta_1^{s-1}(\frac{C\mu_0}{8\epsilon}+(\sqrt{\frac{C\mu_0}{16\epsilon}}Q-C_7\sqrt{\frac{4\epsilon}{C\mu_0}})^2+C_6\epsilon-C_7^2\frac{4\epsilon}{C\mu_0})\\
&\geq \frac{C\mu_0}{2}\epsilon^{-s}+0\\
&\geq C_4\mu_0\epsilon^{-s},
\end{aligned}
\end{equation*}
when $\epsilon>0$ very small, and
\begin{equation}\label{C4}
C_4=C_4(n,s,L,SC,\eta_0)=\frac{C}{2}=\frac{1}{2}\sqrt{ (1-\frac{\mu_0}{2\mu_1})^{\frac{-2}{n+2s}}-1}.
\end{equation}
\end{proof}

Then we want to prove Proposition \ref{I2} by contradiction:
\begin{proof}[Proof of Proposition \ref{I2}]
Assume it is not true, then for some $M\in \mathcal{M}_2$, there exists a positive constant $A>0$ such that
\[
(1-s)\prod{\lambda_j}\int_{\bar{y}\in \mathbb{R}^{n-1}}{\frac{u(y_1,y_2,...,y_{n-1},0)-u(0)}{(\lambda_1^2y_1^2+...+\lambda_{n-1}^2y_{n-1}^2)^{\frac{n+2s-1}{2}}}d\bar{y}}=-A<0.
\]
Then since $M=diag\{\eta_1,...,\eta_n\} \in \mathcal{M}_2$, there exists $B=diag\{\sigma_1,...,\sigma_n\}\in \Gamma_2$. WLOG we require $\sum{\sigma_i\sigma_j}=1$. Then we can see $M=Df_2(B)$ and
\[
\eta_j=\frac{1}{2}(\sum_{i\neq j}{\sigma_i}).
\]
Take another matrix $\tilde{B} \in \Gamma_2$, that $\tilde{B}=diag \{\tilde{\sigma}_1,\tilde{\sigma}_2,...,\tilde{\sigma}_n\}$, and let
\[
\tilde{\sigma}_j=t_j\sigma_j, j=1,2,...,n-1
\]
and
\[
\tilde{\sigma}_n=g(t)\sigma_n.
\]
Given any $t>0$ every small, first find $n$ unknowns $t_1,t_2,...,t_{n-1},f(t)$ such that the following $n$ equations are satisfied:
\[
\frac{\tilde{\eta}_j}{\eta_j}=\frac{(\sum_{1\leq i\leq n-1}{t_i\sigma_i})-t_j\sigma_j+g(t)\sigma_n}{(\sum{\sigma_i})-\sigma_j}=\frac{1}{t}, j=1,2,...,n-1;
\]
and
\[
1=\sum\{\tilde{\sigma}_i\tilde{\sigma}_j\}=\sum_{1\leq i<j \leq n-1}{t_it_j\sigma_i\sigma_j}+g(t)\sigma_n(t_1\sigma_1+t_2\sigma_2+...+t_{n-1}\sigma_{n-1}).
\]
The last equation means
\[
g(t)=\frac{1-\sum_{1\leq i<j \leq n-1}{t_it_j\sigma_i\sigma_j}}{\sigma_n(t_1\sigma_1+t_2\sigma_2+...+t_{n-1}\sigma_{n-1})},
\]
and as $t,t_j \to 0$, $g(t) \to \infty$ if $\sigma_n>0$. And if $\sigma_n<0$, then $g(t)<0$ but still we will have $\tilde{\sigma}_n=g(t)\sigma_n$ positive and goes to $\infty$. Then
\[
\tilde{M}=Df_2(\tilde{B})=diag\{\tilde{\eta}_1,\tilde{\eta}_2,...,\tilde{\eta}_n\}
\]
with
\[
\tilde{\eta}_j=\frac{1}{t}\eta_j, j=1,2,...,n-1;
\]
\[
\tilde{\eta}_n=\frac{t_1\sigma_1+t_2\sigma_2+...+t_{n-1}\sigma_{n-1}}{\sigma_1+\sigma_2+...+\sigma_{n-1}}=h(t)\eta_n
\]
And as $t\to 0$,
\[
h(t) \to 0.
\]
Then
\[
\tilde{\lambda}_j=\sqrt{t}\lambda_j, j=1,2,...,n-1;
\]
and
\[
\tilde{\lambda}_n=h(t)^{-1/2}\lambda_n.
\]
Now since $\tilde{M} \in \mathcal{M}_2$ as well, therefore it satisfies the equation
\begin{equation*}
\begin{aligned}
0<\frac{\eta_0}{1-s}&\leq \prod{\tilde{\lambda}_j}\int_{\mathbb{R}^n}{\frac{u(y)-u(0)}{(\tilde{\lambda}_1^2y_1^2+...+\tilde{\lambda}_n^2y_n^2)^{(n+2s)/2}}dy}\\
& \leq \prod{\tilde{\lambda}_j}\int_{\mathbb{R}^n}{\frac{u(\bar{y},y_n)-u(\bar{y},0)}{(\tilde{\lambda}_1^2y_1^2+...+\tilde{\lambda}_n^2y_n^2)^{(n+2s)/2}}dy}+\prod{\tilde{\lambda}_j}\int_{\mathbb{R}^n}{\frac{u(\bar{y},0)-u(0)}{(\tilde{\lambda}_1^2y_1^2+...+\tilde{\lambda}_n^2y_n^2)^{(n+2s)/2}}dy}\\
&=P_1+P_2.\\
\end{aligned}
\end{equation*}
Define $\lambda=\min \{\lambda_1,...,\lambda_n\}>0$, first we can calculate $P_1$：
\begin{equation*}
\begin{aligned}
P_1& \leq t^{(n-1)/2}h(t)^{-1/2}\prod{\lambda_j}\int_{\mathbb{R}^n}{\frac{\max \{2L|y_n|,SC|y_n|^2\}}{(t(\lambda_1^2y_1^2+...+\lambda_{n-1}^2y_{n-1}^2)+\frac{1}{h(t)}\lambda_n^2y_n^2)^{(n+2s)/2}}dy}\\
&  \leq t^{(n-1)/2}h(t)^{-1/2}\lambda^{-n-2s}\prod{\lambda_j}\int_{\mathbb{R}^n}{\frac{\max \{2L|y_n|,SC|y_n|^2\}}{(t(y_1^2+...+y_{n-1}^2)+\frac{1}{h(t)}y_n^2)^{(n+2s)/2}}dy}.
\end{aligned}
\end{equation*}
Do change of variables
\[
z_j=\frac{y_j}{|y_n|}\sqrt{th(t)}, j=1,2,...,n-1
\]
and
\[
z_n=y_n,
\]
we can calculate
\[
P_1 \leq \lambda^{-n-2s}\prod{\lambda_j}h(t)^s\int_{\mathbb{R}}{\frac{\max \{2L|z_n|,SC|z_n|^2\}}{|z_n|^{1+2s}}dz_n} \int_{\mathbb{R}^{n-1}}{\frac{1}{(1+|\bar{z}|^2)^{\frac{n+2s}{2}}}d\bar{z}}.
\]
Calculating details are similar to the proof of Proposition \ref{prop1} and with definitions of \eqref{C_1} and \eqref{C_2} we know
\[
P_1 \leq  h(t)^s C(\lambda)C_1C_2.
\]
Then we calculate $P_2$, that 
\begin{equation*}
\begin{aligned}
P_2&=\prod{\tilde{\lambda}_j}\int_{\mathbb{R}^n}{\frac{u(\bar{y},0)-u(0)}{(\tilde{\lambda}_1^2y_1^2+...+\tilde{\lambda}_n^2y_n^2)^{(n+2s)/2}}dy}\\
&=t^{(n-1)/2}h(t)^{-1/2} \prod{\lambda_j} \int_{\mathbb{R}^n}{\frac{u(\bar{y},0)-u(0)}{(t(\lambda_1^2y_1^2+...+\lambda_{n-1}^2y_{n-1}^2)+\frac{1}{h(t)}\lambda_n^2y_n^2)^{(n+2s)/2}}dy}.
\end{aligned}
\end{equation*}
By change of variable, 
\[
z_j=y_j, j=1,2,...,n-1,
\]
\[
z_n=\frac{y_n}{(\lambda_1^2y_1^2+...+\lambda_{n-1}^2y_{n-1}^2)^{1/2}}\frac{\lambda_n}{\sqrt{th(t)}},
\]
we can calculate this integral
\begin{equation*}
\begin{aligned}
P_2&=t^{-s}\frac{1}{\lambda_n}\prod{\lambda_j}\int_{\mathbb{R}^{n-1}}{\frac{u(\bar{z},0)-u(0)}{(\lambda_1^2z_1^2+...+\lambda_{n-1}^2z_{n-1}^2)^{(n+2s-1)/2}}d\bar{z}}\int_{\mathbb{R}}{\frac{1}{(1+|z_n|^2)^{(n+2s)/2}}dz_n}\\
&=t^{-s}C_3\frac{1}{\lambda_n}\frac{-A}{1-s}.
\end{aligned}
\end{equation*}
Here
\[
C_3=C_3(n,s)=\int_{\mathbb{R}}{\frac{1}{(1+z_n^2)^{(n+2s)/2}}dz_n}
\]
is the same as in \eqref{C3}.
Then as $t \to 0$, since $A>0$ positive, and $h(t)\to 0$,
\[
P_1+P_2 \leq h(t)^sC(\lambda)C_1C_2-t^{-s}\frac{AC_3}{(1-s)\lambda_n} \to -\infty,
\]
and this contradicts
\[
P_1+P_2 =\prod{\tilde{\lambda}_j}\int_{\mathbb{R}^n}{\frac{u(y)-u(0)}{(\tilde{\lambda}_1^2y_1^2+...+\tilde{\lambda}_n^2y_n^2)^{(n+2s)/2}}dy} \geq \frac{\eta_0}{1-s}>0,
\]
which completes the proof of Proposition \ref{I2}.
\end{proof}

\section*{Acknowledgement }
\thanks{The author would like to thank her Ph.D. advisor, Professor Luis Caffarelli, for many valuable conversations on this project. She also want to thank Professor Sun-Yung Alice Chang, who shared her ideas on this topic and pointed out the k-cone problem. She is also grateful to many colleagues and friends, especially Hui Yu, who offered many helpful comments on this paper.}



\end{document}